\newtheorem{theorem}{Theorem}[section]
\newtheorem{lemma}[theorem]{Lemma}
\newtheorem{corollary}[theorem]{Corollary}
\newtheorem{proposition}[theorem]{Proposition}
\newtheorem{claim}[theorem]{Claim}
\theoremstyle{definition}
\newtheorem{definition}[theorem]{Definition}
\newtheorem{question}[theorem]{Question}
\theoremstyle{remark}
\newtheorem{remark}[theorem]{Remark}
\numberwithin{equation}{section}
\renewcommand\bigskip{\medskip}
\def\to{\rightarrow}
\def\cF{\mathcal{F}}
\def\N{\mathbb N}
\def\cM{\mathcal{M}}
\def\Q{\mathbb Q}
\def\R{\mathbb R}
\def\Z{\mathbb Z}
\def\cV{\mathcal{V}}
\def\cF{\mathcal{F}}
\def\cE{\mathcal{N}_0}
\def\CC{{\rm conv}\,}
\def\cW{\mathcal{W}}
\def\cA{\mathcal{A}}
\def\cJ{\mathcal{J}}
\def\cN{\mathcal{N}}
\newcommand{\ifi}{\mathtt{a}}
\newcommand{\kfi}{\mathtt{c}}
\newcommand{\io}{\mathtt{i}}
\newcommand{\jo}{\mathtt{j}}
\newcommand{\lfi}{\mathtt{c}}
\newcommand{\jfi}{\mathtt{b}}
\title[The scenery flow of self-similar measures]{The scenery flow of self-similar measures with weak separation condition}
\author{Aleksi Pyörälä}
\address{Research Unit of Mathematical Sciences, P.O.Box 8000, FI-90014, University of Oulu, Finland}
\email{aleksi.pyorala@oulu.fi}
\begin{document}

\thanks{The research of this project has been supported by the Academy of Finland. I am thankful to my supervisors Ville Suomala and Meng Wu for their helpful comments and suggestions.}
\subjclass[2010]{Primary 37A10; Secondary 28A80, 28D05}

\begin{abstract}
We show that self-similar measures on $\R^d$ satisfying the weak separation condition are uniformly scaling. Our approach combines elementary ergodic theory with geometric analysis of the structure given by the weak separation condition. 
\end{abstract}

\maketitle

\section{Introduction}
Taking tangents and obtaining information on an object by studying its small-scale structure is a classical idea in analysis. Much like tangent spaces can be used to study the local structure of differentiable manifolds, tangent measures can be used to study the local structure of a Radon measure $\mu$. Tangent measures are defined as the weak-$^*$ accumulation points of the \emph{scenery} of $\mu$ at $x$, that is, the flow $(\mu_{x,t})_{t \geq 0}$, where
$$
\mu_{x,t}(A) = \frac{\mu(e^{-t}A + x)}{\mu(B(x,e^{-t}))}
$$ 
for measurable $A \subseteq B(0,1)$, and $B(x,e^{-t})$ denotes the closed ball centered at $x$ and of radius $e^{-t}$. 

The collection of tangent measures around a point can be very large, making it difficult to infer from this collection any information on the original measure. For example, O'Neil \cite{Oneil} has shown that there exist Radon measures on $\R^d$ which possess every non-zero Radon measure as a tangent measure, in almost every point. This raises the question whether some tangent measures are more relevant than others in terms of studying the structure of the original measure.

Recently, it has been observed that instead of individual tangent measures, it is often more useful to study the statistical behaviour of the scenery $(\mu_{x,t})_{t \geq 0}$. To establish these statistics, one views the scenery as the orbit of $\mu$ under the continuous magnification operation at $x$. This idea gives rise to the notion of \emph{tangent distributions}, defined as the accumulation points of the \emph{scenery flow}
$$
\left( \frac{1}{T} \int_0^T \delta[\mu_{x, t}]\,dt \right)_{T > 0}.
$$ 
Here and throughout, $\delta[y]$ denotes the Dirac measure at the point $y$. Tangent distributions at $x$ are supported on the collection of all tangent measures at $x$, and are closely connected to Furstenberg's \emph{CP-processes} \cite{Furstenberg} and their coordinate-independendent generalizations, Hochman's \emph{fractal distributions} \cite{Hochman}. The theory of tangent distributions was greatly developed by Hochman and Shmerkin in \cite{HS} and further by Hochman in \cite{Hochman}. 

It is often possible to study geometric properties of a measure by studying the structure of its tangent distributions. In particular, measures for which the scenery flow converges, as $T \to \infty$, to a common distribution in almost every point in their support, are geometrically much more regular than arbitrary measures. For example, they are always exact dimensional \cite{Hochman}. Measures of this kind are called \emph{uniformly scaling}, a concept first introduced by Gavish \cite{Gavish}.

During recent years, certain dynamical properties of tangent distributions have proved to be a powerful tool in establishing fine-structure properties of fractal measures. For example, when the scenery flow converges to an \emph{ergodic fractal distribution} (see Definition \ref{fractaldistribution}) in almost every point, as is often the case for uniformly scaling measures due to a remarkable result of Hochman \cite{Hochman}, the measure satisfies a strong version of the classical Marstrand's projection theorem: If $\mu$ is a uniformly scaling measure on $\R^d$ generating an ergodic fractal distribution, then for every $k$ and $\varepsilon > 0$ there exists an open dense set $\mathcal{U}_\varepsilon$ in the space of linear projections from $\R^d$ to $\R^k$ such that for all $\pi \in \mathcal{U}_\varepsilon$,
\begin{equation}\label{marstrand}
\dim \pi\mu > \min \lbrace k, \dim \mu \rbrace - \varepsilon.
\end{equation}
This is due to Hochman and Shmerkin \cite[Theorem 8.2]{HS}. Here and in the sequel $\dim$ refers to Hausdorff dimension. 

In \cite{FFS}, Ferguson, Fraser and Sahlsten applied this projection theorem to prove Falconer's distance set conjecture in some special cases: If $\mu$ is a uniformly scaling measure which generates an ergodic fractal distribution and $\mathcal{H}^1({\rm spt}\,\mu) > 0$, writing 
$$
D({\rm spt}\,\mu) = \lbrace |x-y|:\ x,y \in {\rm spt}\,\mu \rbrace
$$
for the distance set of the support of $\mu$, we have
\begin{equation}\label{distanceset}
\dim D( {\rm spt}\,\mu ) \geq \min \lbrace 1, \dim \mu \rbrace.
\end{equation}
This is \cite[Theorem 1.7]{FFS}. 

Another application of the uniform scaling property is found in the prevalence of normal numbers in the support of the measure. Recall that a number $x \in [0,1]$ is $a$-\emph{normal} if the sequence $\lbrace a^k x \mod 1 \rbrace_{k \in \N}$ equidistributes for Lebesgue measure, and a measure $\mu$ is called \emph{pointwise} $a$-\emph{normal} if $\mu$-almost every $x$ is $a$-normal. In a breakthrough paper \cite[Theorem 1.1]{HS_equidistribution}, Hochman and Shmerkin established a condition for a uniformly scaling measure to be pointwise normal in terms of dynamical properties of its tangent distribution: If $\mu$ is uniformly scaling, generates an ergodic fractal distribution $P$ and the pure-point spectrum of $P$ does not contain a non-zero integer multiple of $\frac{1}{\log a}$, where $a > 1$ is a Pisot number, then $\mu$ is pointwise $a$-normal.

Motivated in part by this progression, much attention has lately been given to the problem of understanding the scenery flow for various classes of fractal measures, and whether they are uniformly scaling and generate ergodic fractal distributions. Examples of uniformly scaling measures include ergodic measures in the one-sided shift space \cite{FP}, the occupation measure on Brownian motion in dimension at least $3$ \cite{Gavish}, self-affine measures on Bedford-McMullen-type carpets \cite{FFS} and self-similar measures with the \emph{open set condition} \cite{Gavish}. Recall that a measure $\mu$ on $\R^d$ is \emph{self-similar} if there exists a finite family of strictly contracting, angle-preserving affine maps $\lbrace \varphi_i \rbrace_{i=1}^m$ and a probability vector $(p_i)_{i=1}^m$ with positive entries such that 
\begin{equation}\label{selfsimilarintro}
\mu = \sum_{i=1}^m p_i \cdot \mu \circ \varphi_i^{-1}.
\end{equation}
For measures on $\R$ that satisfy \eqref{selfsimilarintro} for some equicontractive family $\lbrace \varphi_i \rbrace_{i=1}^m$, De-Jun Feng \cite{Fengpreprint} has shown that the open set condition can be relaxed to the strictly weaker \emph{finite type condition}. The separation conditions play crucial roles in \cite{Gavish} and \cite{Fengpreprint}, and this gives rise to the natural question: Are all self-similar measures uniformly scaling?

The main result of this paper, Theorem \ref{uniformly_scaling}, provides an affirmative answer to this question under a separation condition called the \emph{weak separation condition}. We say that the family $\lbrace \varphi_i \rbrace_{i=1}^m$ (or the associated self-similar measure) in \eqref{selfsimilarintro} satisfies the weak separation condition if the identity is not an accumulation point of the topological group generated by $F^{-1} F$, where $F$ is the semigroup with identity generated by $\lbrace \varphi_i \rbrace_{i=1}^m$. (In Section \ref{Preliminaries}, Definition \ref{weak_separation}, we adapt a different but equivalent characterization that is more explicitly reflected in the geometry of the self-similar measure.) We remark that this condition is strictly weaker than the conditions present in \cite{Gavish} and \cite{Fengpreprint}; see \cite{Zerner, LNfinitetype, HHR} for discussion on the relationship between the three conditions. 

\begin{theorem}\label{uniformly_scaling}
If $\mu$ is a self-similar measure satisfying the weak separation condition, then $\mu$ is uniformly scaling and generates an ergodic fractal distribution.
\end{theorem}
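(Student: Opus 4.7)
The plan is to lift the scenery flow to an orbit of a dynamical system over the Bernoulli shift, and then apply Birkhoff's ergodic theorem. Let $\Sigma = \{1,\ldots,m\}^{\N}$, let $\sigma$ be the left shift, let $\nu$ be the Bernoulli measure with weights $(p_i)_{i=1}^m$, and let $\pi\colon\Sigma\to\R^d$ be the natural coding, so that $\mu = \pi_*\nu$. The key geometric input I would extract from the weak separation condition is a \emph{finite type at every scale} property: there is a constant $\ell\in\N$ and a compact space $\cA$ of ``neighborhood types'' such that for every $x\in\R^d$ and every scale $r>0$, the collection of distinct affine maps $\varphi_\omega$ with $\|\varphi_\omega\|\asymp r$ and $\varphi_\omega(K)\cap B(x,r)\ne\emptyset$ (where $K$ is the attractor) has size at most $\ell$, and its relative configuration, rescaled so as to view $x$ at the origin at unit scale, determines a point $\tau(x,r)\in\cA$.

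With this in place, the scenery $\mu_{\pi(\io),t}$ at time $t$ is a continuous function $M$ of the pair $(\sigma^{n(t)}\io,\tau(\pi(\io),e^{-t}))\in\Sigma\times\cA$, where $n(t)$ is the stopping time at which $\|\varphi_{i_1\cdots i_{n(t)}}\|$ first drops below $e^{-t}$: concretely, $\mu_{\pi(\io),t}$ is a convex combination of finitely many affinely embedded copies of $\mu$, encoded by $\tau$, with weights determined by the cylinder weights. This exhibits the scenery as a factor of an orbit of a continuous skew-product flow $(\mathcal{F}_t)$ over $(\Sigma,\sigma,\nu)$ with fibre in $\cA$. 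A Krylov--Bogolyubov argument produces an $(\mathcal{F}_t)$-invariant probability measure $\rho$ on $\Sigma\times\cA$ projecting to $\nu$; passing to an ergodic component and applying Birkhoff's theorem to the observable $(\io,\tau)\mapsto\delta[M(\io,\tau)]$ yields convergence of the discrete-time averages of the scenery to a common distribution $P$ for $\rho$-typical initial data. An interpolation across the intervals $[t,t+\log\|\varphi_i\|^{-1}]$ then upgrades the convergence to the genuinely continuous scenery averages in the definition of a tangent distribution.

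The final step is to verify that the limit $P$ is an ergodic fractal distribution on $\mu$-a.e.\ fibre. Magnification-invariance of $P$ is inherited from the $\mathcal{F}_t$-invariance of $\rho$, and ergodicity of $P$ will follow from ergodicity of the chosen component of $\rho$ once I argue that the factor map $(\io,\tau)\mapsto M(\io,\tau)$ is measure-theoretically compatible with ergodic decomposition. The quasi-Palm property demands that re-centering a $P$-typical tangent measure at a typical point produce a measure with the same distribution; I would establish this by translating the re-centering operation into an application of the flow $\mathcal{F}_t$ along a different orbit of the skew product, which is possible precisely because $M(\io,\tau)$ is itself a convex combination of affine copies of $\mu$ and re-centering at any of them drops us onto another symbolic orbit.

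The main obstacle I anticipate is ensuring that $\rho$ admits a \emph{single} ergodic component projecting to $\nu$, so that the tangent distribution $P$ is independent of the base point $x\in{\rm spt}\,\mu$ (as the uniformly scaling conclusion demands). The base system $(\Sigma,\sigma,\nu)$ is ergodic, but a skew product with a rich fibre can in principle split into several ergodic components sitting over $\nu$, each producing a different tangent distribution at different points. Ruling this out requires a transitivity argument on neighborhood types, and this is where the weak separation condition does real work beyond mere finiteness: the overlap maps $\varphi_{\omega'}^{-1}\varphi_\omega$ in $F^{-1}F$ that populate a neighborhood at scale $r$ can be redistributed within a single orbit as $r$ decreases, and the hypothesis that the identity is not an accumulation point of the topological group generated by $F^{-1}F$ controls this redistribution quantitatively. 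Establishing the resulting transitivity of the type process, together with the quasi-Palm property above, is where I expect the bulk of the geometric analysis to be concentrated.
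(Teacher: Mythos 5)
Your plan correctly identifies the two pillars (symbolic representation of the sceneries plus an ergodic theorem over the shift), but as written it has a genuine gap, and it is exactly the one you flag at the end without resolving. First, the skew-product route forces you to prove that the ``type process'' $(\io,\tau)\mapsto(\sigma^{n(t)}\io,\tau')$ admits a unique (or at least a canonical, point-independent) ergodic invariant measure over the Bernoulli base, and that $\bar\mu$-a.e.\ $\io$, started at its \emph{actual} initial type $\tau_0(\io)$, is generic for it. Krylov--Bogolyubov plus Birkhoff only gives convergence of the empirical averages at $\rho$-typical initial conditions in $\Sigma\times\cA$; it says nothing about the specific orbit $(\sigma^{n(t)}\io,\tau(\pi(\io),e^{-t}))$ attached to a $\bar\mu$-typical $\io$, and different points could \emph{a priori} be generic for different components sitting over $\bar\mu$, which is precisely what uniform scaling must exclude. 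Your proposed fix --- a transitivity argument for the type process driven by the weak separation condition --- is not a routine verification: the WSC only bounds the cardinality of each neighbourhood system $\cN(\ifi)$, and whether the \emph{global} collection of neighbourhood types is even finite is an open question (see Remark \ref{remarkwsc}), so the fibre dynamics you would need to control is poorly understood. The same remark applies to your plan to verify the quasi-Palm property by hand: no argument is given, and this is a substantial step.

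The paper sidesteps both difficulties rather than solving them. Instead of building an invariant measure on a type skew product, it uses the Feng--Lau maximality lemma (Lemma \ref{maximal}) to find a single word $\ifi_0$ whose neighbourhood system $\cE$ recurs along $\bar\mu$-a.e.\ orbit, fixes the one reference measure $\nu=\sum_{f\in\cE}f\mu$ of \eqref{nudefinition}, and proves (Proposition \ref{nu_p}) that at every return time to a cylinder $[\ifi_0\jfi_0]$ the scenery is \emph{exactly} $(\zeta\,d\nu)_{\sigma^k\io,\,t+k\log\rho}$ with $\zeta$ ranging in a compact family of densities bounded below; Proposition \ref{distribution_asymptotic} then makes the approximation uniform on a positive-measure set. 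After that, Birkhoff's theorem is applied directly on the ergodic base $(\Gamma^\Z,\bar\mu,\sigma)$ to a distribution-valued integrable function (Claim \ref{asymptotic}), so independence of the limit from $\io$ comes for free from ergodicity of the Bernoulli measure, with no multiplicity-of-components issue ever arising. Finally, the ergodic fractal distribution statement is not proved by verifying $S^*$-invariance, ergodicity and quasi-Palm directly: it is deduced from Hochman's structure theory (\cite{Hochman}: generated distributions are fractal distributions, ergodic components of fractal distributions are fractal distributions), combined with the fact that the limit charges the closed set of measures $(\zeta\,d\nu)_{\io,t}$ and an absolute-continuity argument via Lemma \ref{conv_to_0}. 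If you want to salvage your approach, you would either need to import these results of Hochman at the end as the paper does, or supply the missing transitivity and genericity arguments, which I expect to be harder than the problem itself.
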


As an immediate application to the geometry of the measure, this allows us to deduce \eqref{marstrand} and \eqref{distanceset} for self-similar measures satisfying the weak separation condition. Combining Theorem \ref{uniformly_scaling} with the methods of Hochman-Shmerkin \cite{HS_equidistribution}, we also obtain an application to their pointwise normality; see Corollary \ref{equidistribution} in Section \ref{Discussion}.

The idea in the proof of Theorem \ref{uniformly_scaling} is to approximate the flow $(\mu_{x,t})_{t \geq 0}$ with another flow that behaves statistically in a similar manner as the orbit of a point in the underlying shift space. We construct this flow by first producing a fixed reference measure $\nu$ on $\R^d$ via the weak separation condition (in \eqref{nudefinition}). Then, drawing from certain recurrence properties in the underlying symbolic space, we may ensure that $\mu_{x, t}$ is ``close to'' $\nu$ for a large proportion of scales $t$, in a quite strong sense which also ensures that the magnifications of $\mu_{x,t}$ are close to those of $\nu$. This is made precise in Propositions \ref{nu_p} and \ref{distribution_asymptotic}. We may then use tools from ergodic theory to deduce that the approximating flow equidistributes for a limiting measure on the space of measures independent of the point $x$. Finally, a compactness argument allows us to deduce the convergence of the scenery flow of $\mu$. It then follows immediately from results of Hochman that the generated distribution is an ergodic fractal distribution.

In the presence of the open set condition, approximating $(\mu_{x,t})_{t \geq 0}$ with another flow is not necessary to gain access to tools from ergodic theory. Indeed, it follows from the open set condition and the relation \eqref{selfsimilarintro} that $\mu$ is homogeneous (in the sense of Gavish \cite{Gavish}), i.e. any measure in the (weak-$^*$) closure of the family $\lbrace \mu_{x,t}:\ x \in \R^d, t \geq 0 \rbrace$ equals a restriction of a scaled and translated copy of $\mu$ itself. Drawing from the CP-process machinery of Furstenberg, Gavish \cite{Gavish} showed that this closure always contains many uniformly scaling measures, in fact when $\mu$ is \emph{any} Radon measure on $\R^d$. Combining this with the homogeneity of $\mu$, Gavish concluded that self-similar measures with the open set condition are uniformly scaling. 

Another argument was presented by Hochman in \cite{Hochman} assuming the \emph{strong separation condition} under which the measures in the sum of \eqref{selfsimilarintro} have disjoint supports. In particular, if $K$ denotes the support of $\mu$, then the map $T: K \to K$, $x \mapsto \varphi_i^{-1}(x)$ where $1 \leq i \leq m$ is the unique number such that $x\in \varphi_i(K)$, is well-defined and preserves $\mu$. As described in \cite[Section 4]{Hochman}, the scenery flow of $\mu$ now arises as a factor of a suspension flow of the dynamical system $(K, \mu, T)$ (or of a certain skew-product dynamical system, when the linear parts of $\varphi_i$ are not positive scalars). Applying tools from ergodic theory on this suspension flow allowed Hochman to establish the uniform scaling property for self-similar measures satisfying the strong separation condition. 

Since a self-similar measure satisfying the weak separation condition is not necessarily homogeneous, nor is it in general preserved under any obvious map on $\R^d$, the existing arguments do not directly apply in our setting. On the other hand, our argument relies heavily on the weak separation condition in providing the reference measure $\nu$, which leaves open the question whether self-similar measures without this separation condition are uniformly scaling. 

The paper is organized as follows. In Section \ref{Preliminaries}, we set up our notation and recall the basic theory of dynamical systems and iterated function systems. In Section \ref{The scenery flow} we prove Theorem \ref{uniformly_scaling} in the simplified case when the defining IFS consists of only homotheties, i.e. functions composed of scaling and translation operations. Afterwards, we discuss the minor changes required to prove the general case. Section \ref{Asymptotics} is devoted to the proofs of Propositions \ref{nu_p} and \ref{distribution_asymptotic}, the main tools required in the proof of Theorem \ref{uniformly_scaling}. In Section \ref{Discussion} we briefly discuss the second assertion of Theorem \ref{uniformly_scaling}, a generalization of Theorem \ref{uniformly_scaling} for ergodic Markov measures, and its application to pointwise normality of self-similar measures.

\section{Preliminaries}\label{Preliminaries}

In this paper, a measure always refers to a Radon measure on a metrizable topological space. For a compact subset $A$ of a metric space $X$, write $\mathcal{P}(A)$ for the space of probability measures on $X$ for which ${\rm spt}\,\mu \subseteq A$. This space is endowed with the compact weak-$^*$ topology which we metrize with the Prokhorov metric $d$. We use the notation $\Vert \cdot \Vert$ to denote the total variation norm. It is well-known that convergence in the metric induced by the total variation norm implies convergence in the Prokhorov metric. Finally, the Euclidean norm is denoted by $|\cdot|$ and the supremum norm by $\Vert \cdot \Vert_\infty$. 

For a measure $\nu$ and all non-null measurable sets $A$, write $\nu|_A: B \mapsto \nu( B \cap A)$ for the restriction of $\nu$ to $A$ and $\nu_A = \nu(A)^{-1} \nu|_A$ for the normalized restriction. For a $\nu$-measurable function $f$, write $f\nu = \nu \circ f^{-1}$ for the image (or push-forward) measure, and if $f$ is integrable, write $f d\nu: A \mapsto \int \chi[A](x) f(x) \,d\nu(x)$ for the weighted measure, where $\chi[A]$ stands for the indicator function of $A$. 

\subsection{The scenery flow}

Following the notation of \cite{Hochman}, for $t \geq 0$ and $x \in \R^d$, we write $S_t:\ y \mapsto e^t y$ for the exponential scaling map and $T_x:\ y \mapsto y-x$ for the translation which takes $x$ to the origin. The function $S$ induces an additive action of $[0, +\infty)$ on $\mathcal{P}(B(0,1))$, given for every $t$ by $S_t^*: \nu \mapsto C (S_t \nu)|_{B(0,1)}$, where $C$ is such that $S_t^* \nu$ is a probability measure. Here and throughout, $B(0,1)$ denotes the closed unit ball. We sometimes refer to $S^*$ as the ``zoom-in'' operation and point out that in \cite{Hochman} it was denoted by $S^\square$ in order to emphasize restriction to $B(0,1)$. For the composed scaling and translation operation, we use the short-hand notation
$$
\nu_{x,t} := S_t^* T_x \nu.
$$

Using the introduced notation, we recall the definition of the scenery flow of a measure $\nu$ at a point $x \in {\rm spt}\,\nu$ as the flow
$$
\left( \frac{1}{T} \int_0^T \delta[\nu_{x, t}]\,dt \right)_{T > 0}.
$$
Note that this is a flow on $\mathcal{P}(\mathcal{P}(B(0,1)))$. The elements of $\mathcal{P}(\mathcal{P}(B(0,1)))$ are termed \emph{distributions} to emphasize that they are measures on the space of measures. The space of distributions is endowed with the weak-$^*$ topology. As a consequence of the compactness of this topology, the set of accumulation points of the scenery flow at any point is non-empty. Recall that a measure $\nu$ is termed uniformly scaling if there exists a distribution $P$ such that the scenery flow of $\nu$ converges to $P$ $\nu$-almost everywhere, as $T \to \infty$.

\subsection{The symbolic space}

Let $\Gamma = \lbrace 1, \ldots, m \rbrace$ be an alphabet with $m \geq 2$, and let $\Phi = \lbrace \varphi_i \rbrace_{i \in \Gamma}$ be a finite family of contractions on $\R^d$. The family $\Phi$ is called an iterated function system (IFS). It is well-known (see e.g. \cite{Falconerbook}) that there is a unique compact set $K$, called the \emph{attractor} of $\Phi$, such that
$$
K = \bigcup_{i \in \Gamma} \varphi_i(K).
$$
If the functions $\varphi_i$ are of the form $\varphi_i(x) = \rho_i R_i x + a_i$, where $0 < \rho_i < 1$, $R_i$ is an orthogonal matrix on $\R^d$ and $a_i \in \R^d$, the IFS $\Phi$ is called \emph{self-similar} and its attractor a self-similar set. Since our study does not depend on the coordinate basis of $\R^d$, in order to simplify notation we may and do suppose that $0 \in K$ and $K \subset B(0,1)$, where the inclusion is strict. 

Write $\Gamma^* = \bigcup_n \Gamma^n$ for the set of finite vectors, or \emph{words}, composed of elements of $\Gamma$. We usually denote finite words by characters $\ifi$, $\jfi$ and $\lfi$. For a finite word $\ifi = (i_0, i_1 ,\ldots, i_n) \in \Gamma^*$, write $\varphi_\ifi =\varphi_{i_0} \circ \cdots \circ \varphi_{i_n}$ and $K_\ifi = \varphi_\ifi(K)$. The notation $|\ifi|$ stands for the number of elements in the word $\ifi$ and is called the length of $\ifi$. For finite words $\ifi=(i_0, \ldots, i_n)$ and $\jfi=(j_0, \ldots, j_m)$, we let $\ifi \jfi:=(i_0, \ldots, i_n, j_0, \ldots, j_m) \in \Gamma^{|\ifi|+|\jfi|}$ denote their concatenation.

Elements of $\Gamma^\N$ are called infinite words and we often denote them by characters $\io$ and $\jo$. For a word $\io$, either infinite or of length $|\io| \geq k$, we write $\io|_k \in \Gamma^k$ for its projection to the first $k$ coordinates. Given $\ifi \in \Gamma^*$, we call elements of the set $\lbrace \jfi \in \Gamma^\N \cup \bigcup_{k \geq |\ifi|} \Gamma^k:\ \jfi|_{|\ifi|} = \ifi \rbrace$ \emph{descendants} of $\ifi$; the words $\ifi|_k$, $k = 1, 2, \ldots, |\ifi|$ are called the \emph{ancestors} of $\ifi$. These notions extend in the obvious way to the sets $K_{\ifi}$ and the functions $\varphi_{\ifi}$. For a finite word $\jfi \in \Gamma^*$, we write $[\jfi]$ for the cylinder set $\lbrace \io \in \Gamma^\N:\ \io|_{|\jfi|} = \jfi \rbrace$. We also use the term ``cylinder'' to refer to the sets $K_{\ifi}$. The collection of all cylinder sets generate the topology with which we equip $\Gamma^\N$.

Let $\sigma: (i_0, i_1 ,\ldots) \mapsto (i_1, i_2, \ldots)$ denote the continuous left-shift on $\Gamma^\N$. We say that a measure $\nu$ on $\Gamma^\N$ is \emph{invariant} with respect to $\sigma$ if $\sigma \nu = \nu$, and \emph{ergodic}, if $\nu(E) \in \lbrace 0,1 \rbrace$ for all sets $E$ satisfying $\sigma^{-1}E = E$. 

It is sometimes convenient to consider the \emph{two-sided extension} $(\Gamma^\Z, \sigma)$ of the dynamical system $(\Gamma^\N, \sigma)$, where $\sigma(\ldots, i_{-1}; i_0, i_1, \ldots) = (\ldots, i_{0}; i_1, i_2, \ldots)$ is also used to denote the left-shift on $\Gamma^\Z$ which, we note, is now invertible. For $\io = (\ldots, i_{-1}; i_0, i_1, \ldots) \in \Gamma^\Z$ and $n \leq m \in \Z$, write $\io|_{n}^m = (i_n, i_{n+1} \ldots, i_{m-1}, i_m) \in \Gamma^*$ and $[\io]_n^m = \lbrace (\ldots, j_{-1};j_0, j_1 ,\ldots) \in \Gamma^\Z:\ (j_n, j_{n+1}, \ldots ,j_{m-1}, j_m) = \io|_n^m \rbrace$. The topology of $\Gamma^\Z$ is again generated by the cylinder sets $[\io]_n^m$. There exists a natural bijection between the spaces of $\sigma$-invariant probability measures on $\Gamma^\N$ and $\Gamma^\Z$, given by $\nu \mapsto \nu^*$, where $\nu^*([\io]_n^m) = \nu([\io|_{n}^m])$ for every $\io \in \Gamma^\Z$ and $n \leq m$. It is straightforward to check that this bijection maps ergodic measures to ergodic ones. 

Write $\pi$ for the surjective map $\Gamma^\N \to K$, $$\io \mapsto \lim_{k \to \infty} \varphi_{\io|_k}(0),$$
which we call the natural projection. Slightly abusing notation, we often write $T_{\io} := T_{\pi(\io)}$ for the translation taking $\pi(\io)$ to the origin.

Fix a probability vector $p = (p_i)_{i \in \Gamma}$ and write $\bar{\mu} = p^\N$ for the associated Bernoulli measure on $\Gamma^\N$. It is well-known that the measure $\pi\bar{\mu} =: \mu$ is a Radon measure supported on the self-similar set $K$ and satisfies 
$$
\mu = \sum_{i \in \Gamma} p_i \cdot \varphi_i \mu.
$$
This measure is called the \emph{self-similar measure} associated to the vector $p$. Without loss of generality, we assume that $p$ has strictly positive entries: Those $i$ with $p_i = 0$ can be removed from $\Gamma$ without affecting $\mu$, and if only one entry is positive, $\mu$ is a point mass and trivially uniformly scaling.

\subsection{The weak separation condition}\label{WSC}

To make the structure of self-similar sets and measures more tractable, there are numerous separation conditions one can impose on the defining IFS. These conditions are meant to limit the ways the cylinders $K_{\ifi}$ can overlap with one another, making it easier to understand the geometric properties of self-similar measures by studying the dynamical system $(\Gamma^\N, \mu, \sigma)$. Understanding the geometric properties of self-similar sets and measures without assuming any separation condition is an interesting and challenging problem in fractal geometry; see \cite{Hochmanoverlaps, Shmerkinlq, Varju, HochmanICM} and the references therein for some recent breakthroughs on the topic.

The \emph{weak separation condition} was first introduced by Lau and Ngai in \cite[Definition 6.2]{LN}, where the authors studied the multifractal formalism for self-similar measures satisfying the condition. It allows overlaps between the cylinders $K_{\ifi}$ but in a certain limited manner: Roughly speaking, for any $r>0$, any closed ball of radius $r$ may intersect at most $M$ distinct cylinders of diameter $\approx r$, for some $M$ independent of $r$. We point out that no bound is imposed on the number of \emph{exact overlaps}: Given $\ifi \in \Gamma^*$, there might be numerous words $\jfi \in \Gamma^*$ for which $K_{\ifi} = K_{\jfi}$. Before we formulate the separation condition precisely, some more notation is in order.

Let $\Phi = \lbrace \varphi_i(x) = \rho_i R_i x + a_i \rbrace_{i \in \Gamma}$ be a self-similar IFS. For a finite word $\ifi = (i_0, \ldots, i_{n}) \in \Gamma^*$, write $\ifi^- = (i_0, \ldots, i_{n-1})$, $\rho_{\ifi} = \rho_{i_0} \cdots \rho_{i_n}$, $B_{\ifi} = \varphi_{\ifi}(B(0,1))$, and denote
$$
\cN(\ifi) = \lbrace \varphi_{\ifi}^{-1} \circ \varphi_{\jfi}:\ \jfi \in \Gamma^*,\ \rho_{\jfi} \leq \rho_{\ifi} < \rho_{\jfi^-},\ K_{\jfi} \cap B_{\ifi} \neq \emptyset \rbrace.
$$
We call the collection $\cN(\ifi)$ the \emph{neighbourhood system} of $\ifi$. The sets $\varphi_{\ifi} \circ f(K)$, $f \in \cN(\ifi)$, are called the \emph{neighbours} of $K_{\ifi}$.

\begin{definition}\label{weak_separation}
The IFS $\Phi$ satisfies the \emph{weak separation condition}, if
\begin{equation}\label{WSCequation}
\sup_{\ifi \in \Gamma^*} \# \cN(\ifi) < \infty.
\end{equation}
\end{definition}

\begin{remark}\label{remarkwsc}
If $K$ is not contained in an affine hyperplane and we write $\cN'(\ifi) = \lbrace f \in \cN(\ifi):\ f(K) \cap {\rm int}\, \CC K \neq \emptyset \rbrace$, where ${\rm int}$ denotes the interior and $\CC$ denotes the convex hull, then it is not difficult to see that \eqref{WSCequation} is equivalent to the condition $\sup_{\ifi \in \Gamma^*} \# \cN'(\ifi) < \infty$. In fact\footnote{We thank Alex Rutar for suggesting this remark.}, when ${\rm int}\, \CC K \neq \emptyset$, it is an interesting question whether \eqref{WSCequation} is equivalent to the a-priori stronger condition
\begin{equation}\label{equivalentcondition}
\# \left( \bigcup_{\ifi \in \Gamma^*} \cN'(\ifi)\right) < \infty.
\end{equation}
This question was first raised by Lau and Ngai in \cite{LNfinitetype}. Some evidence for the equivalence was provided in \cite{HHR} where the authors showed that \eqref{WSCequation} implies \eqref{equivalentcondition} when $K = [0,1]$. 
\end{remark}

While appearing different in form, Definition \ref{weak_separation} coincides with that of Lau and Ngai; this is proved in Zerner's paper \cite[Theorem 1]{Zerner}, where multiple equivalent formulations (including the one in the introduction) for the weak separation condition are listed. We refer the reader to \cite{LW, FL, KR, FHOR} for further results on the structure of weakly separated self-similar sets and measures.

In our study of the scenery flow, we aim to establish regularity in $(\mu_{\io, t})_{t \geq 0}$ through the existence of a recurring neighbourhood system in the sequence $(\cN(\io|_k))_{k \in \N}$. To study this recurrence with the help of the theory of dynamical systems, we let $\ifi_0 \in \Gamma^*$ be a word for which
\begin{equation}\label{i_0}
\#\cN(\ifi_0) = \sup_{\ifi \in \Gamma^*} \# \cN(\ifi)
\end{equation}
and call the \emph{maximal neighbourhood (system)} the collection
\begin{equation}\label{maximalneighbourhood}
\cE := \cN(\ifi_0).
\end{equation}
The following observation regarding the recurrence of $\cE$ in the zoom-in process was made by Feng and Lau in \cite{FL}. It is one of the key tools in our proof. 

\begin{lemma}\label{maximal}
For any $\lfi \in \Gamma^*$, 
$$
\lbrace \varphi_\jfi:\ \jfi \in \Gamma^*,\ \rho_{\jfi} \leq \rho_{\kfi \ifi_0} \leq \rho_{\jfi^-},\ K_{\jfi} \cap B_{\lfi \ifi_0} \neq \emptyset \rbrace = \varphi_{\lfi \ifi_0} \cE,
$$
where we write $\varphi_{\lfi \ifi_0} \cE = \lbrace \varphi_{\lfi \ifi_0} \circ f:\ f \in \cE \rbrace$.
\end{lemma}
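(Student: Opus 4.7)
The plan is to combine a direct inclusion computation with the maximality of $\ifi_0$ to upgrade that inclusion to an equality. The key observation is that the left-hand side of the claimed identity, after composing on the left by $\varphi_{\lfi\ifi_0}^{-1}$, is precisely $\cN(\lfi\ifi_0)$. Thus it suffices to prove $\cE = \cN(\lfi\ifi_0)$ and then push both sides forward by $\varphi_{\lfi\ifi_0}$.

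First I would establish the inclusion $\cE \subseteq \cN(\lfi\ifi_0)$ by an explicit construction. Given $f = \varphi_{\ifi_0}^{-1}\varphi_\jfi \in \cE$, where $\jfi$ satisfies $\rho_\jfi \leq \rho_{\ifi_0} < \rho_{\jfi^-}$ and $K_\jfi \cap B_{\ifi_0} \neq \emptyset$, I would consider the concatenation $\lfi\jfi$. Because the maps in $\Phi$ are similarities and contraction ratios multiply along concatenations, the conditions defining membership of $\lfi\jfi$ in the indexing set for $\cN(\lfi\ifi_0)$ follow from those for $\jfi$ by multiplying the ratios by $\rho_\lfi$ and pushing the intersection forward by $\varphi_\lfi$. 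All conditions survive intact, and a direct calculation gives $\varphi_{\lfi\ifi_0}^{-1}\varphi_{\lfi\jfi} = \varphi_{\ifi_0}^{-1}\varphi_\jfi = f$, so $f \in \cN(\lfi\ifi_0)$.

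To promote this inclusion to an equality, I would invoke the weak separation condition, which guarantees that $\cE$ is finite, together with the maximality \eqref{i_0} of $\ifi_0$, which forces $\#\cN(\lfi\ifi_0) \leq \#\cE < \infty$. Combined with the inclusion above, this pins down $\cE = \cN(\lfi\ifi_0)$. Applying $\varphi_{\lfi\ifi_0}$ to both sides and unwinding the definition of $\cN(\lfi\ifi_0)$ then recovers exactly the asserted identity.

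I do not foresee any genuine obstacle: the argument reduces to a finite-set cardinality comparison, with the WSC providing finiteness and the maximality of $\ifi_0$ supplying the matching upper bound. The only bookkeeping point is verifying that the strict inequality $\rho_{\ifi_0} < \rho_{\jfi^-}$ transfers correctly after scaling by $\rho_\lfi$, which is immediate from multiplicativity of the contraction ratios.
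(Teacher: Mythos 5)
Your proposal is correct and is essentially the paper's own argument: the paper likewise obtains the inclusion $\varphi_{\lfi \ifi_0} \cE \subseteq \lbrace \varphi_{\jfi}: \rho_{\jfi} \leq \rho_{\lfi \ifi_0} \leq \rho_{\jfi^-},\ K_{\jfi} \cap B_{\lfi \ifi_0} \neq \emptyset \rbrace$ by prepending $\lfi$ to the words indexing $\cE$ (your concatenation step, using multiplicativity of the ratios and applying $\varphi_{\lfi}$ to the nonempty intersection), and then upgrades it to an equality by the cardinality maximality \eqref{i_0} of $\#\cN(\ifi_0)$, which the weak separation condition makes available. The only cosmetic difference is that the paper performs the comparison after pulling back by $\varphi_{\lfi \ifi_0}$, whereas you phrase it as $\cE = \cN(\lfi \ifi_0)$ and then push forward; the content is identical.
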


\begin{figure}[H]

\begin{tikzpicture}[scale=0.8]

\draw (0,0) rectangle (4,4) node[right] {$K$};
\draw[fill, lightgray]  (1.3, 1.7) node[left, black] {$K_{\kfi_1 \ifi_0}$} rectangle (2.6, 3);
\draw  (1.3, 1.7) rectangle (2.6, 3);

\draw[fill, lightgray] (2.8, 0.3) rectangle (3.3, 0.8) node[above, black] {$K_{\kfi_2 \ifi_0}$};
\draw (2.8, 0.3) rectangle (3.3, 0.8);
\draw[dotted, thin] (3.05, 0.55) circle (0.4);

\draw[dotted, thin] (1.95, 2.4) circle (1);
\draw[dotted, thin] (1.95, 3.4) -- (8.2, 4.5);
\draw[dotted, thin] (1.95, 1.4) -- (8.2, -0.5);
\draw[dotted, thin] (3.05, 0.95) -- (8.2, 4.5);
\draw[dotted, thin] (3.05, 0.15) -- (8.2, -0.5);
\draw[dotted, thin] (8.2, 2) circle (2.5);

\draw[fill, lightgray] (7, 1) rectangle (9, 3);
\draw(7, 1)  rectangle (9, 3);
\draw (6.5, 0.5) rectangle (8.5, 2.5);
\draw (6.7, 1.9) rectangle (8.7, 3.9);
\draw (7.7, 1.6) rectangle (9.7, 3.6);
\draw (8, 0.3) node[below, black] {$\lbrace f(K):\ f \in \cE \rbrace$} rectangle (10, 2.3);

\end{tikzpicture}
\caption{Here $\kfi_1, \kfi_2 \in \Gamma^*$. The neighbourhood system of any cylinder whose symbolic coding ends with $\ifi_0$ is $\cE$.}

\end{figure}

\begin{proof}
We have 
\begin{align*}
&\lbrace \varphi_{\jfi}:\ \jfi \in \Gamma^*,\ \rho_{\jfi} \leq \rho_{\kfi \ifi_0} \leq \rho_{\jfi^-},\ K_{\jfi} \cap B_{\lfi \ifi_0} \neq \emptyset \rbrace \\
\supset\ &\lbrace \varphi_{\lfi \ifi}:\ \ifi \in \Gamma^*,\ \rho_{\kfi \ifi} \leq \rho_{\kfi \ifi_0} \leq \rho_{\kfi \ifi^-},\ K_{\lfi \ifi} \cap B_{\lfi \ifi_0} \neq \emptyset \rbrace\\
=\ &\lbrace \varphi_{\lfi} \circ \varphi_{\ifi_0} \circ \varphi_{\ifi_0}^{-1} \circ \varphi_{\ifi}:\ \ifi \in \Gamma^*,\ \rho_{\ifi} \leq \rho_{ \ifi_0} \leq \rho_{\ifi^-},\ K_{ \ifi} \cap B_{\ifi_0} \neq \emptyset \rbrace \\
=\ &\varphi_{\lfi \ifi_0} \cE.
\end{align*}
This is equivalent to
\begin{equation}\label{included}
\cE \subseteq \lbrace \varphi_{\lfi \ifi_0}^{-1} \circ \varphi_{\jfi}:\ \jfi \in \Gamma^*,\ \rho_{\jfi} \leq \rho_{\kfi \ifi_0} \leq \rho_{\jfi^-},\ K_{\jfi} \cap B_{\lfi \ifi_0} \neq \emptyset \rbrace.
\end{equation}
Recall from \eqref{i_0} and \eqref{maximalneighbourhood} that $\cE$ has cardinality at least that of the right-hand side of \eqref{included}. Hence \eqref{included} holds with equality.
\end{proof}

\section{The scenery flow}\label{The scenery flow}

For simplicity, we first prove the following special case of Theorem \ref{uniformly_scaling}, and afterwards discuss the minor modifications required in the proof in order to lift these restrictions on the IFS.
\begin{theorem}\label{specialcase}
If $\Phi = \lbrace \phi_i(x) = \rho x + a_i\rbrace_{i \in \Gamma}$ is an IFS of equicontractive homotheties on $\R^d$ satisfying the weak separation condition, then any self-similar measure $\mu$ associated to $\Phi$ is uniformly scaling.
\end{theorem}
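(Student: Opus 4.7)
The plan is to follow the strategy outlined in the introduction: construct a fixed reference measure $\nu$ on $B(0,1)$ from the maximal neighborhood $\cE$, show that $\mu_{\pi\io, t}$ is close to $\nu$ at a positive-density set of scales $t$, and then reduce the scenery-flow average $\frac{1}{T}\int_0^T \delta[\mu_{\pi\io, t}]\,dt$ to an ergodic average over the Bernoulli shift $(\Gamma^\N, \bar\mu, \sigma)$.

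For the reference measure, I would set
$$
\nu := \frac{1}{Z}\sum_{f \in \cE} c_f \cdot f\mu
$$
with weights $c_f \geq 0$ accounting for the exact-overlap multiplicities --- namely, sums of $p_\jfi$ over the words $\jfi$ realizing $f$ via $\varphi_{\ifi_0}^{-1}\varphi_\jfi = f$ --- and $Z$ a normalization. In the equicontractive homothety case every $f \in \cE$ is a pure translation, so $\nu$ is a finite convex combination of translated copies of $\mu$ supported inside $B(0,1)$. The point of this choice is that, by Lemma~\ref{maximal}, for every $\lfi \in \Gamma^*$ the cylinders of scale $\rho^{|\lfi\ifi_0|}$ meeting $B_{\lfi\ifi_0}$ are exactly $\varphi_{\lfi\ifi_0}\cE$, so after zooming in around any point of $K_{\lfi\ifi_0}$ at that scale, the restriction of $\mu$ to the zoom window should resemble $\nu$. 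Applying the Birkhoff ergodic theorem to $(\Gamma^\N, \bar\mu, \sigma)$, for $\bar\mu$-a.e.\ $\io$ the set $N(\io) := \{n : \io|_n \text{ ends in } \ifi_0\}$ has density $p_{\ifi_0} > 0$. At the scales $t_n = n\log(1/\rho)$, $n \in N(\io)$, this resemblance should be sharpenable to $\mu_{\pi\io, t_n} \approx \nu$ --- the content of the forthcoming Proposition~\ref{nu_p}. Using $\mu_{\pi\io, t_n + s} = S_s^*\mu_{\pi\io, t_n}$, which is immediate from the definition of the zoom-in, the approximation then propagates to $\mu_{\pi\io, t_n + s} \approx S_s^*\nu$ for all $s \geq 0$ up to the next element of $N(\io)$, yielding Proposition~\ref{distribution_asymptotic}.

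With the two propositions in hand, on each interval between successive recurrence times $n < n' \in N(\io)$ the scenery-flow integrand $\delta[\mu_{\pi\io, t}]$ approximately matches $\delta[S_{t - t_n}^*\nu]$. The inter-recurrence gaps have finite mean under $\bar\mu$, so combining the per-interval contributions, $\frac{1}{T}\int_0^T \delta[\mu_{\pi\io, t}]\,dt$ is expressed, up to a vanishing error, as an ergodic average of a bounded distribution-valued observable along the shift orbit of $\io$. Ergodicity of $\bar\mu$ then gives convergence $\bar\mu$-a.e.\ to a single distribution $P$ independent of $\io$, and a weak-$^*$ compactness argument transfers this to $\mu$-a.e.\ convergence of the scenery flow --- the required uniform scaling property. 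The step I expect to be the main obstacle is the propagation in Proposition~\ref{distribution_asymptotic}: mere total-variation closeness $\mu_{\pi\io, t_n} \approx \nu$ gets amplified by the renormalization factor $\mu_{\pi\io, t_n}(B(0, e^{-s}))^{-1}$ implicit in the zoom-in $S_s^*$, so the WSC must be leveraged to produce a stronger, uniformly multiplicative comparison between $\mu$ restricted to the zoom window and $\nu$ --- strong enough to survive arbitrary further zoom-ins.
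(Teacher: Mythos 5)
You have the right global architecture (reference measure built from the maximal neighbourhood $\cE$, recurrence to scales where $\cN(\io|_k)=\cE$, reduction to a Birkhoff average over return times, then $\varepsilon\to 0$ plus compactness), but the step you yourself flag as the main obstacle is a genuine gap, and it cannot be closed in the form you propose. First, the approximation ``$\mu_{\io,t_n}\approx\nu$'' at recurrence scales is not available: by Lemma \ref{selfsimilarity} the scenery at such a scale is a zoom-in, \emph{centered at the shifted point} $\pi(\sigma^{k}\io)$, of the measure $\sum_{f\in\cE}q_f(\io|_k)\, f\mu$, and the weights $q_f(\io|_k)$ depend on the entire past $\io|_k$ and keep fluctuating with $k$; they do not converge to any fixed weight vector, so the sceneries do not approach a single reference measure $\nu$ (weighted or not). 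Second, even granting the strongest comparison the weak separation condition can deliver --- a two-sided multiplicative bound $c\le d\mu_{\mathrm{window}}/d\nu_{\mathrm{window}}\le C$ with constants uniform in $k$ but \emph{not} tending to $1$ --- this is not enough to conclude that further zoom-ins $S_s^*$ keep the two measures $\varepsilon$-close at every scale $s$: a density merely bounded between two constants does not control the Prokhorov distance of normalized blow-ups. So your proposed Proposition \ref{distribution_asymptotic} (``propagate $\mu_{\io,t_n}\approx\nu$ forward by $S_s^*$'') fails as stated, and no strengthening of the closeness at the single scale $t_n$ will rescue it.

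The paper's actual resolution is structurally different and worth internalizing: Proposition \ref{nu_p} is an \emph{exact identity}, $\mu_{\io,t}=(\zeta(\io|_k)\,d\nu)_{\sigma^k\io,\,t+k\log\rho}$ valid for every $t$ in the whole block, so nothing has to be propagated under zooming; the price is that the density $\zeta(\io|_k)$ depends on the past, and the geometric work (Claim \ref{j_0}, producing the word $\jfi_0$) is spent ensuring that $\zeta(\io|_k)$ lies in a compact family $\cW$ of densities uniformly bounded below. The only approximation then occurs when trading the scenery flow of $\zeta\,d\nu$ at $\pi(\sigma^{t_k}\io)$ for that of $\mu$ at the same point: since $\mu\ll\zeta\,d\nu$, the Lebesgue--Besicovitch differentiation theorem (Lemma \ref{conv_to_0}) gives $\Vert\mu_{\jo,t}-(\zeta d\nu)_{\jo,t}\Vert\to 0$ at a.e.\ base point, and the uniform lower bound on $\cW$ upgrades this to uniformity over $\zeta$, on a positive-measure set $\cJ$ and after a fixed time $N$ --- that is the true content of Proposition \ref{distribution_asymptotic}, which compares \emph{time-averaged} sceneries at the same base point rather than single measures across scales. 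Finally, the observable $\psi(\jo)=\int\delta[\mu_{\jo,t}]\,dt$ over one return block depends on $\jo$ alone, so Birkhoff applies along returns to a set $\cJ'$ in the two-sided extension that records both the past condition ($\ifi_0\jfi_0$ just ended) and the future condition ($\jo^+\in\cJ$), with Kac's theorem giving integrability of the return time; your one-sided recurrence set $N(\io)$ does not encode the membership in $\cJ$ that the approximation step requires. Without the exact-identity formulation (or some substitute for your propagation step), the proposal does not yield the theorem.
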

In the setting of Theorem \ref{specialcase}, the neighbourhood system of a finite word $\ifi \in \Gamma^*$ assumes the slightly simplified form
$$
\cN(\ifi) = \lbrace \varphi_{\ifi}^{-1} \circ \varphi_{\jfi}:\ \jfi \in \Gamma^{|\ifi|},\ K_{\jfi} \cap B_{\ifi} \neq \emptyset \rbrace.
$$

If $(r_n)_{n \in \N} \subseteq [0, +\infty)$ is a sequence such that 
\begin{equation}\label{conditionforrn}
\lim_{n \to \infty} \frac{r_n}{n} \in (0,+\infty),
\end{equation}
then the sequence
\begin{equation}\label{beginning}
\left( \frac{1}{r_n} \int_0^{r_n} \delta[\mu_{\io, t}] \,dt \right)_{n \in \N} 
\end{equation}
is asymptotic to the scenery flow of $\mu$ at $\pi(\io)$. Our strategy is for every $\varepsilon > 0$ and almost every $\io \in \Gamma^\N$ to choose the numbers $r_n = r_n(\io)$ in such a way that the tail of \eqref{beginning} is within Prokhorov distance $\varepsilon$ from a distribution $P_\varepsilon$ independent of $\io$. Provided that $(r_n)_{n \in \N}$ also satisfies \eqref{conditionforrn}, it follows that tail of the scenery flow is within distance $2\varepsilon$ from $P_\varepsilon$. Taking $\varepsilon \to 0$ along a countable sequence and using the compactness of the space of distributions, we then obtain a set of full $\mu$-measure in which the scenery flow converges. 

To find a distribution $P_\varepsilon$ which nearly captures the statistics of the scenery of $\mu$, we want to approximate $(\mu_{\io, t})_{t \geq 0}$ with a flow whose dynamics can be more easily traced back to $(\Gamma^\N, \bar{\mu}, \sigma)$. Since our prior hope of accomplishing this is to use the recurrence of $\cE$ in $(\cN(\io|_k))_{k \in \N}$ which follows from the recurrent visits of $(\sigma^k \io)_{k \in \N}$ in $[\ifi_0]$, our first task is to exhibit the scenery measures using the neighbourhood systems appearing around $\pi(\io)$. The following representation is immediate from the self-similarity of $\mu$; recall that $T_{\io}$ denotes the translation $x \mapsto x - \pi(\io)$. 

\begin{lemma}\label{selfsimilarity}
Let $\io \in \Gamma^\N$. For every $k \in \N$, there exists a probability vector $(q_f(\io|_k))_{f \in \cN(\io|_k)}$ such that
$$
\mu_{\io, t} = S_{t+ k \log \rho}^* T_{\sigma^k \io} \sum_{f \in \cN(\io|_k)} q_f(\io|_k) \cdot f \mu
$$
whenever $t > 0$ is such that $B(\pi(\io), e^{-t}) \subseteq B_{\io|_k}$.
\end{lemma}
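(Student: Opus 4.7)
The plan is to iterate the self-similarity relation $\mu = \sum_{i \in \Gamma} p_i \varphi_i \mu$ to depth $k$, obtaining $\mu = \sum_{\jfi \in \Gamma^k} p_\jfi \varphi_\jfi \mu$ with $p_\jfi := p_{j_0}\cdots p_{j_{k-1}}$, and then to regroup the $m^k$ terms according to the neighbourhood system $\cN(\io|_k)$. Since $\mu_{\io,t}$ is, up to normalisation, the rescaled translate of $\mu|_{B(\pi(\io),e^{-t})}$ and $B(\pi(\io),e^{-t}) \subseteq B_{\io|_k}$ by hypothesis, only the words $\jfi \in \Gamma^k$ with $K_\jfi \cap B_{\io|_k} \neq \emptyset$ can contribute a nonzero mass; for any such $\jfi$ the map $f := \varphi_{\io|_k}^{-1} \circ \varphi_\jfi$ lies in $\cN(\io|_k)$ by definition, and distinct $\jfi$'s may yield the same $f$ because of exact overlaps.

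I would accordingly define
\[
\tilde q_f \;=\; \sum_{\substack{\jfi \in \Gamma^k \\ \varphi_{\io|_k}^{-1}\circ \varphi_\jfi \,=\, f \\ K_\jfi \cap B_{\io|_k} \neq \emptyset}} p_\jfi, \qquad Z = \sum_{f \in \cN(\io|_k)} \tilde q_f, \qquad q_f(\io|_k) := \tilde q_f / Z,
\]
which makes $(q_f(\io|_k))_{f \in \cN(\io|_k)}$ a probability vector (note $Z>0$ since e.g.\ $\jfi=\io|_k$ itself contributes, giving $f=\mathrm{id}\in\cN(\io|_k)$), and set $\nu := \sum_{f \in \cN(\io|_k)} q_f(\io|_k) \cdot f\mu$, a probability measure. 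Using $\pi(\io) = \varphi_{\io|_k}(\pi(\sigma^k\io))$ and the equicontractive form $\varphi_{\io|_k}(x)=\rho^k x + a_{\io|_k}$, a direct change of variables gives, for each contributing $\jfi$ and every Borel $A \subseteq B(0,1)$,
\[
\mu\bigl(\varphi_\jfi^{-1}(e^{-t}A + \pi(\io))\bigr) \;=\; (f\mu)\bigl(e^{-t}\rho^{-k}A + \pi(\sigma^k\io)\bigr),
\]
a quantity depending on $\jfi$ only through $f$. Weighting by $p_\jfi$ and summing, together with the iterated self-similarity, then yields $\mu(e^{-t}A + \pi(\io)) = Z \cdot (S_{t+k\log\rho}T_{\sigma^k\io}\,\nu)(A)$.

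Applying the same identity with $A=B(0,1)$ computes the normalising denominator $\mu(B(\pi(\io),e^{-t}))$, so when I form the ratio $\mu_{\io,t}(A) = \mu(e^{-t}A + \pi(\io))/\mu(B(\pi(\io),e^{-t}))$ the factor $Z$ cancels and I recover $\mu_{\io,t} = S^*_{t+k\log\rho}T_{\sigma^k\io}\,\nu$, which is the claimed identity. I do not anticipate any real obstacle; the argument is essentially careful bookkeeping around the self-similarity relation and the definition of $\cN(\io|_k)$. The two points requiring mild attention are (i) exact overlaps, absorbed into $\tilde q_f$ by grouping all $\jfi$ that share the same $f$, and (ii) the $t$-independence of $q_f(\io|_k)$, which is visible because the index set $\{\jfi : K_\jfi \cap B_{\io|_k} \neq \emptyset\}$ does not depend on $t$, so the same vector $(q_f(\io|_k))$ works for every admissible scale $t$.
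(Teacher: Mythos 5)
Your proposal is correct and follows essentially the same route as the paper: iterate the self-similarity relation to depth $k$, observe that only the words $\jfi \in \Gamma^k$ with $K_{\jfi} \cap B_{\io|_k} \neq \emptyset$ contribute on $B(\pi(\io),e^{-t}) \subseteq B_{\io|_k}$, regroup by the maps $f = \varphi_{\io|_k}^{-1}\circ\varphi_{\jfi} \in \cN(\io|_k)$, and normalize. The paper merely packages your explicit change of variables as the operator identity $S_{k\log\rho^{-1}}T_{\io} = T_{\sigma^k\io}\varphi_{\io|_k}^{-1}$ together with the additivity of the $S^*$-action.
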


\begin{proof}
An elementary calculation shows that 
\begin{equation}\label{elementary}
S_{k \log \rho^{-1}} T_{\io} = T_{\sigma^k \io} \varphi_{\io|_k}^{-1}.
\end{equation}
Since we suppose that $B(\pi(\io), e^{-t}) \subseteq B_{\io|_k}$, using self-similarity, \eqref{elementary} and the additivity of the action induced by $S^*$ we can write
\begin{align*}
\mu_{\io, t} &= S_{t+k\log \rho}^* S_{k \log \rho^{-1}} T_{\io} \sum_{\ifi \in \Gamma^k,\ K_{\ifi} \cap B_{\io|_k} \neq \emptyset} p_{\ifi} \cdot \varphi_{\ifi} \mu\\
&= S^*_{t+ k\log\rho} T_{\sigma^k \io} \sum_{\ifi \in \Gamma^k,\ K_{\ifi} \cap B_{\io|_k} \neq \emptyset} p_{\ifi} \cdot (\varphi_{\io|_k}^{-1} \circ \varphi_{\ifi}) \mu
\end{align*}
and using the definition of $\cN(\io|_k)$, the sum in this representation can be reordered as
$$
\sum_{f \in \cN(\io|_k)} \left( \sum_{\ifi \in \Gamma^k,\ \varphi_{\io|_k}^{-1} \circ\varphi_{\ifi} =  f} p_{\ifi} \right) \cdot f \mu.
$$
Normalizing gives the statement.
\end{proof}

Motivated by the recurrence of $\cE$ in $(\cN(\io|_k))_{k \in \N}$ and the prescribed representation for the scenery measures, we define the measure
\begin{equation}\label{nudefinition}
\nu = \sum_{f \in \cE} f \mu.
\end{equation}
This is the recurring ``frame'' to which we compare the scenery measures. Indeed, for those $k$ for which $\cN(\io|_k) = \cE$, Lemma \ref{selfsimilarity} tells us that $\mu_{\io, t} \ll \nu_{\sigma^k \io, t+k\log \rho}$ with density $\zeta = \zeta(\io|_k)$ determined by the weights $q_f(\io|_k)$. While the exact form of the density depends on the entirety of $\io|_k$, the content of the following proposition is that much less information on $\io|_k$ is required to control $\zeta(\io|_k)$ on a sufficient level.

\begin{proposition}\label{nu_p}
There exists a finite word $\jfi_0 \in \Gamma^*$ and a family of $\nu$-integrable functions $\lbrace \zeta_h:\ h \in \cF \subseteq \cE\rbrace$, each bounded from below by a constant $C_h > 0$, such that for almost every $\io \in \Gamma^\N$ and all $k \in \N$ for which $\sigma^{k-|\ifi_0 \jfi_0|} \io \in [\ifi_0 \jfi_0]$, there exists a function $\zeta = \zeta(\io|_k)$ which is a convex combination of the functions $\zeta_h$ such that 
$$
\mu_{\io, t} = \left( \zeta d\nu\right)_{\sigma^{k} \io, t + k \log \rho}
$$
for any $t > 0$ satisfying $B(\pi(\io), e^{-t}) \subseteq B_{\io|_{k}}$.

\end{proposition}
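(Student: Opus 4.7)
\emph{Plan.} I would start from Lemma~\ref{selfsimilarity}: since $\cN(\io|_k)\subseteq\cE$ and $\nu=\sum_{g\in\cE}g\mu$ dominates every $f\mu$ with $f\in\cE$, the measure $\sum_{f}q_f(\io|_k)f\mu$ is automatically of the form $\zeta\,d\nu$ for some density $\zeta\in[0,1]$, so Lemma~\ref{selfsimilarity} gives $\mu_{\io,t}=(\zeta\,d\nu)_{\sigma^k\io,t+k\log\rho}$ as soon as $B(\pi(\io),e^{-t})\subseteq B_{\io|_k}$. The entire content of the proposition therefore concerns the \emph{structure} of $\zeta$.

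Next I will bring in the combinatorial input. Writing $\io|_k=\lfi\ifi_0\jfi_0$ and applying Lemma~\ref{maximal} to $\lfi$, every $\mathtt{n}\in\Gamma^k$ with $K_\mathtt{n}\cap B_{\io|_k}\neq\emptyset$ decomposes as $\mathtt{n}=\mathtt{n}'\mathtt{r}$ with $\varphi_{\mathtt{n}'}=\varphi_{\lfi\ifi_0}\circ g$ for some $g\in\cE$ and $\mathtt{r}\in\Gamma^{|\jfi_0|}$, giving
\[
\varphi_{\io|_k}^{-1}\varphi_\mathtt{n}=\varphi_{\jfi_0}^{-1}\,g\,\varphi_\mathtt{r},
\]
an expression depending only on $(g,\mathtt{r})$. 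Splitting $\sum_f q_f(\io|_k) f\mu$ along this parameterization factors each weight into a universal local piece $p_\mathtt{r}$ and a history-dependent piece $M(\lfi\ifi_0,g):=\sum_{\mathtt{n}':\varphi_{\mathtt{n}'}=\varphi_{\lfi\ifi_0}g}p_{\mathtt{n}'}$. The weak separation condition uniformly bounds the cardinality of this last sum, and after normalizing by $\mu(B_{\io|_k})$ and regrouping I expect to obtain a decomposition $\zeta=\sum_{h\in\cF}\alpha_h(\io|_k)\zeta_h$ with $\alpha_h\geq0$, $\sum_h\alpha_h=1$, for a finite index set $\cF\subseteq\cE$ and universal densities $\zeta_h$ assembled from the corresponding $(g,\mathtt{r})$-blocks. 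The almost-everywhere qualifier should absorb a null set on which these normalization ratios behave pathologically, handled via Birkhoff's theorem applied to the shift on $\Gamma^{\mathbb{N}}$.

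The hard part will be the lower bound $\zeta_h\geq C_h>0$. The plan is to pick $\jfi_0$ long enough that for every $h\in\cF$ there is at least one contributing $(g,\mathtt{r})$-pair for which the cylinder $(\varphi_{\jfi_0}^{-1}g\varphi_\mathtt{r})(K)$ sits in a region where its $d\nu$-density is uniformly positive. Here the WSC enters geometrically: at most $\#\cE$ elements of $\cE$ can overlap at any point, so $d(h\mu)/d\nu\geq1/\#\cE$ on the support of each $h\mu$; combined with the strict positivity of $p$ and a suitable $\jfi_0$, this should yield a uniform positive constant $C_h$. Making this translation precise---converting the WSC overlap multiplicity bound into a globally valid pointwise lower bound on a specific $\zeta_h$ defined through a $(g,\mathtt{r})$-indexed sum---is the main technical hurdle I anticipate.
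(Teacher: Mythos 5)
Your decomposition step is essentially the paper's: you factor each contributing word at level $k-|\jfi_0|$ using Lemma \ref{maximal}, so that the weight splits into a history-dependent piece $M(\lfi\ifi_0,g)$ and a universal piece indexed by $(g,\mathtt{r})$; this is exactly the content of Claim \ref{partition} in the paper, and the resulting regrouping $\zeta=\sum_{h\in\cF}\alpha_h(\io|_k)\zeta_h$ is the same as the paper's \eqref{almostrepresentation}. The genuine gap is in the part you yourself flag as the hurdle: the uniform lower bound $\zeta_h\geq C_h>0$. Your proposed mechanism is flawed in two ways. First, the pointwise bound ``$d(h\mu)/d\nu\geq 1/\#\cE$ on the support of $h\mu$'' does not follow from the weak separation condition: the WSC bounds the \emph{number} of overlapping copies $g\mu$, not their relative local densities, and two overlapping copies can have wildly different densities at small scales, so $d(h\mu)/d\nu$ can be arbitrarily small (even zero on parts of the overlap region) without violating \eqref{WSCequation}. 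Second, even granting positivity of one $(g,\mathtt{r})$-block ``in a region'', that only bounds $\zeta_h$ from below on part of ${\rm spt}\,\nu$, whereas the proposition (and its use in Lemma \ref{t_to_0}, where the bound $\zeta\geq C$ is what makes the modulus of continuity of $\zeta\mapsto(\zeta\,d\nu)_{\io,t}$ uniform in $\io$ and $t$) requires $\zeta_h\geq C_h$ $\nu$-almost everywhere, i.e.\ at all locations and hence at all scales of the normalizing balls.

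What is missing is the paper's construction of $\jfi_0$ (Claim \ref{j_0}): one chooses $\jfi_0$ recursively so that for every $h\in\cF$ there is a word $\jfi_h$ with $\varphi_{\jfi_0}=h\circ\varphi_{\jfi_h}$ \emph{and} $\cN(\jfi_h)=\cE$. The engine of that construction is the exact-overlap statement (Claim \ref{claim1}): if $h(K)$ meets $K_{\ifi_0}$, then some descendant of $h(K)$ coincides exactly with a cylinder $K_{\jfi}$; this uses the maximality of $\cE$ and Lemma \ref{maximal}, and it is the place where the WSC really enters. The payoff of $\cN(\jfi_h)=\cE$ is precisely that for \emph{every} $f\in\cE$ there exists $\mathtt{r}$ with $\varphi_{\mathtt{r}}=\varphi_{\jfi_h}\circ f$, so each $\zeta_h$ contains every density $d(f\mu)|_{B(0,1)}/d\nu$ with coefficient at least $C_h:=\min_{f\in\cE}\sum_{\mathtt{r}:\varphi_{\mathtt{r}}=\varphi_{\jfi_h}\circ f}p_{\mathtt{r}}>0$; since $\sum_{f\in\cE}d(f\mu)/d\nu=1$ $\nu$-a.e., the lower bound holds on all of ${\rm spt}\,\nu$ with no density-comparison argument needed. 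Without this surjectivity (``every $f\in\cE$ is hit from every $h\in\cF$''), choosing $\jfi_0$ merely ``long enough'' does not produce the required bound, so as it stands your plan does not close. (A small additional remark: the almost-everywhere qualifier in the statement is not absorbed by a Birkhoff argument; the identity is deterministic for words ending in $\ifi_0\jfi_0$, with a.e.\ statements entering only through the Radon--Nikodym derivatives.)
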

The proof requires some geometric work and can be found in Section \ref{Asymptotics}. 

We write $$\cW = \lbrace \zeta(\io|_k):\ \io\ {\rm and}\ k\ \text{satisfy the assumptions of Proposition \ref{nu_p}} \rbrace$$ for the countable collection of all functions given by Proposition \ref{nu_p}. Note that as a bounded subset of the linear span of a finite family of functions, the closure of $\cW$ is compact in the topology induced by the supremum norm. Combining this compactness with an application of the Lebesgue-Besicovitch differentiation theorem and some elementary continuity properties of $\zeta \mapsto (\zeta d\nu)_{\io, t}$ we can deduce the following:

\begin{proposition}\label{distribution_asymptotic}
For all $\varepsilon > 0$, there exists a set $\cJ = \cJ(\varepsilon) \subseteq \Gamma^\N$ and an integer $N = N(\varepsilon)$ such that $\bar{\mu}(\cJ) > 0$ and for any $\zeta \in \cW$ and every $\io \in \cJ$, 
\begin{equation}\label{muarbitrary}
d \left( \frac{1}{T} \int_0^{T} \delta[\mu_{\io,t}]\,dt,\ \frac{1}{T} \int_0^{T}  \delta[(\zeta d\nu)_{\io,t}] \,dt \right) < \varepsilon
\end{equation}
for all $T \geq N$.
\end{proposition}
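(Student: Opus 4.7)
The plan is in three steps: first, establish the pointwise comparison $\Vert \mu_{\io,t} - (\zeta d\nu)_{\io,t} \Vert \to 0$ as $t \to \infty$ for $\bar\mu$-a.e.\ $\io$ and each fixed $\zeta \in \cW$; second, uniformize over $\zeta \in \cW$ using the sup-norm compactness of $\overline{\cW}$ together with Egorov's theorem; third, pass from this pointwise estimate to one on the Cesaro averages.

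For the pointwise step I would invoke the Lebesgue--Besicovitch differentiation theorem for the Radon measure $\nu$ on $\R^d$. Since the identity map lies in $\cE$ (as $\varphi_{\ifi_0}^{-1}\circ\varphi_{\ifi_0}=\mathrm{id}$ with $K_{\ifi_0}\cap B_{\ifi_0}\neq\emptyset$), $\mu$ appears as a summand of $\nu$, so $\mu\ll\nu$ with density $h:=d\mu/d\nu$ bounded by $1$. Likewise $\zeta d\nu\ll\nu$ with density $\zeta$; this density is bounded above (as $\cW$ is a bounded subset of a finite-dimensional function space) and uniformly bounded below by $c:=\min_{h\in\cF} C_h>0$. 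By Lebesgue--Besicovitch, at $\nu$-a.e.\ point $x$ the local $L^1(\nu)$-oscillations of $h$ and of each individual $\zeta\in\cW$ vanish; since $\cW$ is countable and $\mu\ll\nu$, there is a $\bar\mu$-conull set on which $\pi(\io)$ is simultaneously a Lebesgue point of $h$ (with $h(\pi(\io))>0$) and of every $\zeta\in\cW$. A direct comparison of Radon--Nikodym derivatives then yields $\Vert\mu_{\io,t}-\nu_{\io,t}\Vert\to 0$ and $\Vert(\zeta d\nu)_{\io,t}-\nu_{\io,t}\Vert\to 0$, hence also $\Vert\mu_{\io,t}-(\zeta d\nu)_{\io,t}\Vert\to 0$.

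To uniformize over $\zeta$, I use that $\overline{\cW}$ is sup-norm compact, so for any $\delta>0$ it admits a finite $\delta$-net $\zeta^{(1)},\ldots,\zeta^{(M)}$. Egorov's theorem applied to each $\zeta^{(j)}$ yields a set $\cJ_j$ of $\bar\mu$-measure at least $1-\varepsilon/(3M)$ and a threshold $N_j$ such that the TV bound is $<\varepsilon/6$ for $\io\in\cJ_j$ and $t\geq N_j$; setting $\cJ:=\bigcap_j\cJ_j$ and $N_0:=\max_j N_j$ gives a positive-measure set with the uniform bound across the finite net. For arbitrary $\zeta\in\cW$, pick a nearest $\zeta^{(j)}$ with $\Vert\zeta-\zeta^{(j)}\Vert_\infty<\delta$; a short calculation with Radon--Nikodym derivatives, using the uniform lower bound $\zeta,\zeta^{(j)}\geq c$, shows that $\Vert(\zeta d\nu)_{\io,t}-(\zeta^{(j)} d\nu)_{\io,t}\Vert=O(\delta/c^2)$ uniformly in $\io,t$. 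This is precisely the continuity of $\zeta\mapsto(\zeta d\nu)_{\io,t}$ alluded to in the paragraph preceding the proposition. Choosing $\delta$ small enough then gives $\Vert\mu_{\io,t}-(\zeta d\nu)_{\io,t}\Vert<\varepsilon/3$ for every $\io\in\cJ$, every $t\geq N_0$, and every $\zeta\in\cW$.

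Finally, since TV-convergence implies convergence in the Prokhorov metric $d$, the last bound yields $d(\mu_{\io,t},(\zeta d\nu)_{\io,t})<\varepsilon/3$ in the same regime; integrating in $t\in[0,T]$ and controlling the contribution from $[0,N_0]$ by $N_0/T$ delivers the claimed inequality for all $T\geq N:=3N_0/\varepsilon$. The main obstacle throughout is attaining uniformity over all $\zeta\in\cW$ simultaneously; this is resolved by the sup-norm compactness of $\overline{\cW}$ combined with the Lipschitz dependence of the zoom-in operation on the density $\zeta$, which in turn relies crucially on the uniform lower bound on functions in $\cW$ provided by Proposition~\ref{nu_p}.
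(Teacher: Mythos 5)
Your proof is correct and follows essentially the same route as the paper: almost-everywhere total-variation convergence via the Besicovitch/Lebesgue differentiation theorem (the paper's Lemma \ref{conv_to_0}), uniformity over $\cW$ from the sup-norm compactness of $\overline{\cW}$ together with the uniform lower bound $\zeta \geq C$, and then the elementary passage from total variation to the Prokhorov distance between the time-averaged distributions. Your finite $\delta$-net plus Egorov argument is just a cosmetic variant of the paper's combination of Lemma \ref{t_to_0} (uniform modulus of continuity in $\zeta$) and Lemma \ref{uniform_convergence}, so no substantive difference or gap remains.
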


The proof can be found in Section \ref{Asymptotics}. 

These propositions together give us a way to approximate the statistics of $(\mu_{\io, t})_{t \geq 0}$ using only the statistics of the recurrence of $(\sigma^k \io)_{k \in \N}$ in $[\ifi_0 \jfi_0]$.

\subsection{The sequence $(r_n)_n$}

Let $\varepsilon > 0$. We now move on to defining the distribution of the random sequence $(r_n)_{n \in \N} $; recall \eqref{beginning} and the discussion thereafter. Let $\jfi_0$ be the finite word given by Proposition \ref{nu_p}, and let $\cJ = \cJ(\varepsilon)$ be the set given by Proposition \ref{distribution_asymptotic}. Since everything we define in the following depends on the set $\cJ$ and hence on $\varepsilon$, we suppress this dependence from our notation.

We will first define, for almost every $\io \in \Gamma^\N$, a sequence $(t_n)_{n \in \N} \subseteq \N$ such that for every $n$,
\begin{itemize}
\item[i)] $\sigma^{t_n} \io \in \cJ$ and
\item[ii)] $\sigma^{t_n - |\ifi_0 \jfi_0|} \io \in [\ifi_0 \jfi_0]$
\end{itemize} 
In particular, the sequence $(\sigma^{t_n} \io)_n$ must also include some information on the ``past'' of each element. One way to deal with this is to consider the two-sided extension of the space $\Gamma^\N$. Since there is a natural bijection between the spaces of invariant measures on $\Gamma^\N$ and $\Gamma^\Z$, it is convenient to denote the extended Bernoulli measure on $\Gamma^\Z$ also by $\bar{\mu}$. Since the natural projection $\pi$ extends to $\Gamma^\Z$ by $\io \mapsto \pi(\io^+)$, where $(\cdot)^+$ denotes the projection to the positive coordinates, and both the Bernoulli measure and the extended one project onto the self-similar measure $\mu$, we do not usually make a distinction between one-sided and two-sided infinite words. 

We let
$$
\cJ' = \lbrace \io \in \Gamma^\Z:\ \io|_{-|\ifi_0 \jfi_0|}^{-1} = \ifi_0 \jfi_0,\ \io^+ \in \cJ \rbrace
$$ 
and define the sequence $(t_n)_{n \in \N}$ as the return times of $\io$ to the set $\cJ'$: write 
$$
\tau:\ \jo \mapsto \min \lbrace n \geq 1:\ \sigma^{n} \jo \in \cJ'\rbrace
$$
for the time of first visit in $\cJ'$, defined $\bar{\mu}$-almost everywhere, and set
\begin{align*}
t_0 &= 0, \\
t_n(\io) &= t_{n-1}(\io) + \tau(\sigma^{t_{n-1}} \io)\ \text{for}\ n \geq 1.
\end{align*}
Note that $\bar{\mu}(\cJ') > 0$. We also point out the following:
\begin{lemma}\label{tauintegrable}
The function $\tau$ is $\bar{\mu}$-integrable over $\cJ'$.
\end{lemma}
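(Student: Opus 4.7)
The plan is to invoke Kac's recurrence theorem, which is tailor-made for precisely this situation. The system in question is $(\Gamma^\Z, \bar{\mu}, \sigma)$, where $\bar{\mu}$ denotes the two-sided Bernoulli extension; since $p$ has strictly positive entries, $\bar{\mu}$ is an ergodic $\sigma$-invariant probability measure. The set $\cJ'$ has positive measure, as noted in the excerpt, because it is the intersection of the cylinder $[\ifi_0 \jfi_0]_{-|\ifi_0\jfi_0|}^{-1}$ (which has positive measure since $p$ assigns positive weight to each symbol) with the event $\lbrace \io^+ \in \cJ \rbrace$, and under the Bernoulli measure these two events depend on disjoint coordinate blocks and are therefore independent, so the intersection has measure equal to the product of two positive numbers.

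The first step is to apply Poincaré recurrence, which ensures that $\tau$ is finite $\bar{\mu}$-a.e.\ on $\cJ'$. Then Kac's lemma yields the quantitative bound
$$
\int_{\cJ'} \tau\, d\bar{\mu} \;=\; \bar{\mu}\!\left( \bigcup_{n \geq 0} \sigma^{-n} \cJ' \right) \;\leq\; 1,
$$
which is exactly the integrability claim. In the ergodic case (which applies here), the right-hand side is in fact equal to $1$, so one obtains the sharper statement $\int_{\cJ'} \tau\, d\bar{\mu} = 1$, though only finiteness is required for the present purposes.

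There is no real obstacle here: the lemma is essentially a direct citation of a classical result of ergodic theory, and the only content of the proof is to observe that the hypotheses of Kac's lemma are satisfied, namely that $\bar{\mu}$ is $\sigma$-invariant and that $\cJ'$ has positive $\bar{\mu}$-measure. The slight subtlety, if any, is that $\cJ$ is defined as a subset of $\Gamma^\N$ and $\cJ'$ as a subset of $\Gamma^\Z$, so one should implicitly use the canonical bijection between the one-sided and two-sided Bernoulli systems to transfer positivity of measure from the one-sided picture to the two-sided one; this is immediate from the defining relation $\nu^*([\io]_n^m) = \nu([\io|_n^m])$ recalled earlier in the Preliminaries.
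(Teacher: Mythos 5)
Your proof is correct and follows essentially the same route as the paper, which likewise deduces the integrability of $\tau$ directly from Kac's theorem using the ergodicity of $\bar{\mu}$ and the positivity of $\bar{\mu}(\cJ')$. The extra details you supply (independence of the coordinate blocks giving $\bar{\mu}(\cJ')>0$, and the transfer between the one-sided and two-sided systems) are accurate and consistent with what the paper takes for granted.
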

\begin{proof}
Since $\bar{\mu}$ is ergodic, this is implied by Kac's theorem \cite[Theorem 2]{Kac}.
\end{proof}

Finally, define 
\begin{align*}
r_n(\io) &= \inf \lbrace r\geq 0:\ B(\pi(\io), e^{-r}) \subseteq B_{\io|_{t_{n}(\io)} } \rbrace \\
&= t_n(\io) \log \rho^{-1} + \alpha_0(\sigma^{t_n(\io)} \io).
\end{align*}
Here
\begin{align*}
\alpha_0:\ \jo \mapsto \inf \lbrace r \geq 0:\ B(\pi(\jo), e^{-r}) \subseteq B(0,1)\rbrace
\end{align*} 
is the correction that arises because the natural projections of $\jo \in \Gamma^\N$ are not necessarily in the ``center'' of $B(0,1)$. Since we suppose that $K$ is strictly included in $B(0,1)$, for a large enough integer $M$ we have $\alpha_0(\jo) \in [0, M]$ for all $\jo\in\Gamma^\N$. 

Let $N$ be the integer given by Proposition \ref{distribution_asymptotic}. Note that since we have
\begin{align*}
r_{n+1}(\io)-r_n(\io) &= (t_{n+1}(\io) - t_{n}(\io)) \log \rho^{-1} + \alpha_0(\sigma^{t_{n+1}} \io) - \alpha_0(\sigma^{t_n} \io) \\
&\geq \log \rho^{-1} - M,
\end{align*}
by replacing the alphabet $\Gamma$ with a high-level iteration $\Gamma^{\lceil (M+N)/\log \rho^{-1}\rceil}$ we can suppose that the difference $r_{n+1}(\io) - r_n(\io) \geq N$ for almost every $\io \in \cJ'$. We will later see that this sequence satisfies the condition \eqref{conditionforrn}. 

We are now ready to prove Theorem \ref{specialcase}.

\subsection{Proof of Theorem \ref{specialcase}}\label{ergodicpart}

In the proof, we require the following application of Birkhoff's ergodic theorem. We postpone its short proof to the end of the section.
\begin{claim}\label{asymptotic}
For any $\bar{\mu}$-integrable (distribution- or real-valued) $g$ and almost every $\io$,
$$
\lim_{n \to \infty} \frac{1}{n} \sum_{k=0}^{n-1} g(\sigma^{t_k} \io) = \int g \,d\bar{\mu}_{\cJ'}.
$$
\end{claim}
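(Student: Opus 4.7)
The plan is to recognize the left-hand side as a Birkhoff average for the first-return map on $\cJ'$. Let $T:\cJ' \to \cJ'$ be defined ($\bar{\mu}_{\cJ'}$-almost everywhere) by $T(\io) = \sigma^{\tau(\io)}(\io)$. By Poincaré recurrence and the ergodicity of $(\sigma, \bar{\mu})$, together with the fact that $\bar{\mu}(\cJ')>0$, the map $T$ is well-defined on a set of full $\bar{\mu}_{\cJ'}$-measure, preserves $\bar{\mu}_{\cJ'}$, and is ergodic with respect to $\bar{\mu}_{\cJ'}$ (these are standard facts about induced transformations and follow, e.g., from Kac's formula used in Lemma \ref{tauintegrable}). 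Moreover, from the inductive definition of $t_n$, for every $\io \in \cJ'$ and every $k \geq 0$, one has $\sigma^{t_k(\io)}(\io) = T^k(\io)$.

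For real-valued $g \in L^1(\bar{\mu})$, Birkhoff's ergodic theorem applied to the system $(\cJ', \bar{\mu}_{\cJ'}, T)$ immediately gives, for $\bar{\mu}_{\cJ'}$-a.e.\ $\io \in \cJ'$,
$$
\lim_{n \to \infty} \frac{1}{n} \sum_{k=0}^{n-1} g(T^k \io) = \int g \,d\bar{\mu}_{\cJ'},
$$
which is precisely the claimed identity on $\cJ'$. To extend this to $\bar{\mu}$-almost every $\io \in \Gamma^\N$, observe that ergodicity of $\sigma$ ensures that a.e.\ $\io$ enters $\cJ'$ in finite time, so $\sigma^{t_1(\io)}\io \in \cJ'$ and the sequence $(\sigma^{t_k(\io)}\io)_{k \geq 1}$ coincides with the $T$-orbit of $\sigma^{t_1(\io)}\io$. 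Since only the single value $g(\io)$ is absent from this $T$-orbit, the two Cesàro averages differ by $O(1/n)$, and the claim passes to a set of full $\bar{\mu}$-measure.

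For the distribution-valued case, fix a countable family $\{\phi_\ell\}_{\ell \in \N}$ of continuous functions on $\mathcal{P}(B(0,1))$ that separates measures in the weak-$^*$ topology (such a family exists by the separability of $C(B(0,1))$). For each $\ell$, the scalar map $\io \mapsto \int \phi_\ell \, dg(\io)$ is bounded and measurable, hence in $L^1(\bar{\mu})$. Applying the real-valued case to each and intersecting the resulting full-measure sets, we obtain a single $\bar{\mu}$-full-measure set on which
$$
\frac{1}{n}\sum_{k=0}^{n-1} \int \phi_\ell \, d g(\sigma^{t_k}\io) \;\longrightarrow\; \int \left( \int \phi_\ell \,dg \right) d\bar{\mu}_{\cJ'}
$$
for every $\ell$. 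By the separating property of $\{\phi_\ell\}$, this yields the weak-$^*$ convergence of the distribution-valued Cesàro averages to $\int g \,d\bar{\mu}_{\cJ'}$. No step is particularly delicate; the mild care required is only in verifying that the induced system is ergodic and in reducing the distribution-valued statement to the scalar Birkhoff theorem via a countable separating family.
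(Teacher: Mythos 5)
Your argument is correct, but it runs along a genuinely different route than the paper's. You pass to the induced system: you form the first-return map $T(\io)=\sigma^{\tau(\io)}\io$ on $\cJ'$, invoke the standard facts that $T$ preserves $\bar{\mu}_{\cJ'}$ and is ergodic, identify $\sigma^{t_k}\io=T^k\io$, and apply Birkhoff in the induced system, finally transferring from $\cJ'$ to $\bar{\mu}$-a.e.\ $\io$ via the first entry point. The paper never forms the induced transformation: it observes that $\ell=t_k$ for some $k$ exactly when $\sigma^\ell\io\in\cJ'$, so after reindexing ($n\mapsto\lfloor\bar{\mu}(\cJ')n\rfloor$) the average over return times becomes $\frac{1}{\bar{\mu}(\cJ')}\cdot\frac1n\sum_{\ell=0}^{n-1}\chi[\cJ'](\sigma^\ell\io)\,g(\sigma^\ell\io)$, and then Birkhoff for $\sigma$ is applied twice, to $\chi[\cJ']$ (to control the number of returns by time $n$) and to $\chi[\cJ']g$. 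Your route is conceptually immediate once the induced-system facts are granted; the paper's is self-contained in that only the ergodic theorem for the full shift is used, with no appeal to invariance or ergodicity of the return map. A few small points you should tighten: ergodicity and $\bar{\mu}_{\cJ'}$-invariance of $T$ are standard for induced transformations but do not follow from Kac's formula (which only gives integrability of $\tau$, as in Lemma \ref{tauintegrable}); in the extension off $\cJ'$ you should note that a.e.\ $\io$ has its first entry point in the $T$-generic full-measure subset of $\cJ'$, since the set of points whose orbit ever meets a $\bar{\mu}$-null set is $\bar{\mu}$-null by $\sigma$-invariance; and in the distribution-valued reduction the test functionals $\io\mapsto\int\phi_\ell\,dg(\io)$ are integrable but not bounded in the intended application (the total mass of $\psi(\jo)$ is $\eta(\jo)$, which is unbounded), the countable family should be taken dense in $C(\mathcal{P}(B(0,1)))$ (separable because $\mathcal{P}(B(0,1))$ is compact metric), and one should also control the total masses of the Cesàro averages (apply the scalar case to $\io\mapsto\Vert g(\io)\Vert$, or include the constant function in the family) before concluding weak-$^*$ convergence. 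None of these affects the validity of the approach.
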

Define the functions $\psi, \eta$ for $\bar{\mu}$-almost every $\jo$ by
\begin{align}\label{psi_r}
\begin{split}
&\psi:\ \jo \mapsto \int_{\alpha_0(\jo)}^{\tau(\jo) \log \rho^{-1} + \alpha_0(\sigma^{\tau(\jo)}\jo)} \delta[\mu_{\jo, t}]\,dt, \\
&\eta:\ \jo \mapsto \tau(\jo)\log \rho^{-1} + \alpha_0(\sigma^{\tau(\jo)}\jo) - \alpha_0(\jo).
\end{split}
\end{align}
The measurability of these functions is standard, and integrability over $\cJ'$ follows from that of $\tau$ (Lemma \ref{tauintegrable}) and the assumption $\alpha_0(\cJ) \subseteq [0,M]$. 

\begin{proof}[Proof of Theorem \ref{specialcase}]
Let now $\io\in \bigcup_{n=0}^\infty \sigma^{-n} \cJ'$ be such that Claim \ref{asymptotic} holds for the functions $\psi$ and $\eta$. The set of these words has full $\bar{\mu}$-measure. Since the scale we start the zoom-in process at has no effect on the generated distribution, we may suppose that $\io \in \cJ'$. To make the notation more clear, from now on we suppress the dependance of $r_k$ and $t_k$ on $\io$. Recall that the set $\cJ'$ was defined so that whenever $\io \in \cJ'$, we have $\io^+ \in \cJ$ and $(\sigma^{-|\ifi_0 \jfi_0|} \io)^+ \in [\ifi_0 \jfi_0]$. 

Now, we can write the sequence \eqref{beginning} as
\begin{align}\label{givesmass}
\frac{1}{r_n} \int_{0}^{r_n} \delta[\mu_{\io,t}]\,dt &= \frac{1}{r_n} \sum_{k=0}^{n-1} \int_{r_{k}}^{r_{k+1}} \delta[\mu_{\io,t}]\,dt\nonumber\\
&= \frac{1}{r_n}\sum_{k=0}^{n-1} \int_{t_k \log \rho^{-1} + \alpha_0(\sigma^{t_k}\io)}^{t_{k+1} \log \rho^{-1}+ \alpha_0 (\sigma^{t_{k+1}} \io)} \delta[(\zeta(\io|_k)d\nu)_{\sigma^{t_k} \io, t+t_k\log \rho}]\,dt\nonumber\\
&= \frac{1}{r_n} \sum_{k=0}^{n-1}\int_{\alpha_0(\sigma^{t_k} \io)}^{\tau(\sigma^{t_k} \io) \log \rho^{-1} + \alpha_0(\sigma^{t_{k+1}} \io)} \delta[(\zeta(\io|_k)d\nu)_{\sigma^{t_k} \io, t}]\,dt,
\end{align}
using Proposition \ref{nu_p} and writing out the definitions of $r_k$ in the second equality, and performing a change of variable $t \mapsto t - t_k \log \rho$ in the last one. Combining this with Proposition \ref{distribution_asymptotic} we get that
\begin{equation}\label{muestimate}
d \left( \frac{1}{r_n} \int_{0}^{r_n} \delta[\mu_{\io,t}]\,dt,\ \frac{1}{r_n} \sum_{k=0}^{n-1}\int_{\alpha_0(\sigma^{t_k} \io)}^{\tau(\sigma^{t_k} \io) \log \rho^{-1} + \alpha_0(\sigma^{t_{k+1}} \io)} \delta[\mu_{\sigma^{t_k} \io, t}]\,dt\right)  < \varepsilon
\end{equation}
for all large enough $n$. Write
$$
Q_n := \frac{1}{r_n} \sum_{k=0}^{n-1}\int_{\alpha_0(\sigma^{t_k} \io)}^{\tau(\sigma^{t_k} \io) \log \rho^{-1} + \alpha_0(\sigma^{t_{k+1}} \io)} \delta[\mu_{\sigma^{t_k} \io, t}]\,dt.
$$

Using the functions $\psi$ and $\eta$ defined in \eqref{psi_r}, we have for $Q_n$ the representation
\begin{align*}
Q_n &= \frac{1}{\sum_{k=0}^{n-1} (r_{k+1} - r_k)} \sum_{k=0}^{n-1} \psi(\sigma^{t_k} \io) 
= \left(\frac{1}{\frac{1}{n}\sum_{k=0}^{n-1} \eta(\sigma^{t_k} \io)}\right) \left(\frac{1}{n}\sum_{k=0}^{n-1} \psi(\sigma^{t_k} \io)\right).
\end{align*}
It follows from Claim \ref{asymptotic} that the sequence $(Q_n)_{n \in \N}$ converges to a limiting distribution $P_\varepsilon$ as $n\to \infty$. Combining this with \eqref{muestimate}, it follows that there exists an integer $n_\varepsilon$ such that
\begin{equation*}
d \left( \frac{1}{r_n} \int_{0}^{r_n} \delta[\mu_{\io,t}]\,dt,\ P_\varepsilon\right) < 2\varepsilon\ \text{for all}\ n \geq n_\varepsilon.
\end{equation*}
Since Claim \ref{asymptotic} applied for $\eta$ shows that 
$$
\lim_{n \to \infty} \frac{r_n}{n} = \lim_{n \to \infty} \frac{1}{n} \sum_{k=0}^{n-1} \eta(\sigma^{t_k} \io) = \int \eta \,d\bar{\mu}_{\cJ'} \in (0, +\infty),
$$
it follows that the sequence
$$
\left( \frac{1}{r_n} \int_{0}^{r_n} \delta[\mu_{\io,t}]\,dt \right)_{n \in \N}
$$ 
is asymptotic to the scenery flow of $\mu$ at $\pi(\io)$. 

We have now found, for every $\varepsilon > 0$, a distribution $P_{\varepsilon}$ and an integer $T_\varepsilon$ such that for almost every $\io$, 
\begin{equation}\label{conclusion}
d \left( \frac{1}{T} \int_0^T \delta[\mu_{\io, t}] \,dt, P_{\varepsilon} \right) < 2\varepsilon\ \text{for all}\ T \geq T_\varepsilon.
\end{equation}
Letting $\varepsilon \to 0$ along a countable sequence and taking the intersection of the sets of full $\bar{\mu}$-measure in which \eqref{conclusion} holds, we obtain a set of full measure in which \eqref{conclusion} holds for every $\varepsilon$. Using the compactness of $\mathcal{P}(\mathcal{P}(B(0,1)))$, it follows that the scenery flow of $\mu$ converges almost everywhere. Moreover, since the distributions $P_\varepsilon$ are independent of $\io$, so is the limit of the scenery flow. This concludes the proof of Theorem \ref{specialcase}.
\end{proof}

\begin{proof}[Proof of Claim \ref{asymptotic}]
Note first that the claim is equivalent to
\begin{align}\label{equivalentclaim}
\lim_{n \to \infty} \frac{1}{\bar{\mu}(\cJ')n} \sum_{k=0}^{\lfloor \bar{\mu}(\cJ')n \rfloor} g(\sigma^{t_k} \io) = \int g \,d\bar{\mu}_{\cJ'}.
\end{align}
Using Birkhoff's ergodic theorem for $\sigma$ and $\chi[\cJ']$ in the form
$$
\lim_{n \to \infty} \frac{\#\lbrace 0 \leq \ell \leq n-1:\ \sigma^\ell \io \in \cJ' \rbrace}{n} = \bar{\mu}(\cJ'),
$$ 
we see that the sequence on the left-hand side of \eqref{equivalentclaim} is asymptotic to the sequence
$$
\frac{1}{\bar{\mu}(\cJ')n} \sum_{k=0}^{\#\lbrace 0 \leq \ell \leq n-1:\ \sigma^\ell \io \in \cJ' \rbrace}g(\sigma^{t_k} \io).
$$
Now, using the fact that $\sigma^\ell \io \in \cJ'$ if and only if $\ell = t_k$ for some $k$, which follows from the definition of $(t_k)_k$ as the return times to the set $\cJ'$, we can write
$$
\frac{1}{\bar{\mu}(\cJ')n} \sum_{k=0}^{\#\lbrace 0 \leq \ell \leq n-1:\ \sigma^\ell \io \in \cJ' \rbrace}g(\sigma^{t_k} \io) = \frac{1}{\bar{\mu}(\cJ')}\frac{1}{n} \sum_{\ell=0}^{n-1} \chi[\cJ'](\sigma^\ell \io) g(\sigma^\ell \io).
$$
Finally, using Birkhoff's ergodic theorem for $\chi[\cJ'] g$ and $\sigma$, we obtain \eqref{equivalentclaim}.
\end{proof}

\subsection{On the proof of Theorem \ref{uniformly_scaling}}\label{subsection} In the proof of Theorem \ref{specialcase} it was assumed that the IFS $\Phi$ consists of equicontractive homotheties. Only minor modifications in the proof are required in order to dispose of this requirement. We provide an overview of the needed modifications; details are left to the interested reader. For now, suppose that $\Psi = \lbrace \varphi_i(x) = \rho_i R_i x + a_i \rbrace_{i \in \Gamma}$ is a self-similar IFS satisfying the weak separation condition and $\mu$ is an associated self-similar measure.

In Proposition \ref{nu_p}, to obtain the stated identity we need to replace the measure $(\zeta d\nu)_{\sigma^k, t+k\log \rho}$ by $(R_{\io|_k}(\zeta d \nu))_{\sigma^k \io, t + \log \rho_{\io|_k}}$, where $R_{\io|_k}$ is the matrix component of $\varphi_{\io|_k}$. The statement of Proposition \ref{distribution_asymptotic} extends as it is for distributions generated by $R\nu$ and $R (\zeta d\nu)$, for any orthogonal matrix $R$. 

In the ergodic-theoretic part of Subsection \ref{ergodicpart}, one must take into account the additional dynamics coming from the matrix components of the similarities. Instead of the system $(\Gamma^\N, \bar{\mu}, \sigma)$, we need to consider the skew-product $(\Gamma^\N \times G, \bar{\mu} \times \gamma, F)$, where $G$ is the closure of the topological group generated by the matrix components of $\varphi_i$, $\gamma$ is the right-invariant Haar measure on $G$ and $F$ is the transformation $(\io, R) \mapsto (\sigma \io, R \cdot R_{\io|_1})$. The ergodicity of this kind of skew-product system is due to Kakutani \cite{Kakutani}. Going through Section \ref{The scenery flow} with these modifications, one sees that for $\gamma$-almost every $R$, the measure $R \mu$ is uniformly scaling. This property is clearly preserved under action by $R^{-1}$. 

The fact that $\mu$ generates an ergodic fractal distribution is immediate from the results of Hochman \cite{Hochman}. Since some additional terminology needs to be introduced before the short deduction, we postpone it to Section \ref{Discussion} in order not to disrupt the main line of argument behind the uniform scaling property.

\section{The scenery of the self-similar measure}\label{Asymptotics}

In this section, we prove Propositions \ref{nu_p} and \ref{distribution_asymptotic}. Although the proofs are written in the setting of Theorem \ref{specialcase} for the sake of notational simplicity, after modifying the statements of the propositions according to the highlights in Subsection \ref{subsection}, the arguments extend with little work to the setting of Theorem \ref{uniformly_scaling}. 

We adopt the notation
\begin{equation}\label{symbolicmag}
\mu_{\ifi} := (\varphi_{\ifi}^{-1} \mu)_{B(0,1)}
\end{equation}
for every $\ifi \in \Gamma^*$, for the ``symbolic magnifications'' of the self-similar measure.

\subsection{Proof of Proposition \ref{nu_p}}

Recall Lemma \ref{selfsimilarity} and that our motivation is to find a way to describe the probability vector $(q_f(\io|_k))_{f \in \cN(\io|_k)}$ without requiring information on the entirety of the word $\io|_k$. Recall also the choice of the word $\ifi_0$ from \eqref{i_0} and the definition of the maximal neighbourhood $\cE$ from \eqref{maximalneighbourhood}. Through the work in this subsection, we will find a finite word $\jfi_0$ such that for any $\kfi \in \Gamma^*$, $\cN(\kfi \ifi_0 \jfi_0) = \cE$ and the weights $q_f(\kfi \ifi_0 \jfi_0)$, $f \in \cE$, are bounded from below by a constant independent of $\kfi$.

The key geometric observation in the proof is the following Claim which provides us with the word $\jfi_0$. 

\begin{claim}\label{j_0}
There exists a finite word $\jfi_0$ and a collection $\cF \subseteq \cE$ such that $\cN(\jfi_0) = \cE$ and
\begin{itemize}
\item[(i)] for every $h \in \cF$, every neighbour of $K_{\jfi_0}$ is a descendant of $h(K)$, that is, there exists a word $\jfi_h \in \Gamma^*$ such that $h \circ \varphi_{\jfi_h} = \varphi_{\jfi_0}$ and $\cN(\jfi_h) = \cE$, 

\item[(ii)] for every $h \in \cE \setminus \cF$, $B_{\jfi_0} \cap h(K) = \emptyset$. 
\end{itemize}
\end{claim}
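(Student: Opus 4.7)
The plan is to choose $\jfi_0$ ending in $\ifi_0$, so that Lemma~\ref{maximal} automatically yields $\cN(\jfi_0)=\cE$, and to define $\cF$ so that (ii) holds by construction; the bulk of the work goes into (i).

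First I would analyze the set-valued map $\iota\colon K\to 2^{\cE}$ defined by $\iota(x)=\{h\in\cE\colon x\in h(K)\}$. Since each $h(K)$ is closed and $\cE$ is finite, $\iota$ takes finitely many values and satisfies $\limsup_{y\to x}\iota(y)\subseteq\iota(x)$; hence $\iota$ is upper semi-continuous in the set-inclusion order on $2^{\cE}$ and admits a dense $G_\delta$ of continuity points. At such a continuity point $y$ there exists $\delta>0$ such that $B(y,\delta)\cap K\subseteq h(K)$ for every $h\in\iota(y)$, while $B(y,\delta)\cap h(K)=\emptyset$ for every $h\in\cE\setminus\iota(y)$.

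I would next fix a continuity point $y\in K_{\ifi_0}$, pick a coding $\io\in\Gamma^\N$ of $y$ starting with $\ifi_0$, and set $\jfi_0:=\io|_n\ifi_0$ for $n$ large enough that $B_{\jfi_0}\subseteq B(y,\delta)$; this is possible since $K_{\io|_n\ifi_0}\subseteq K_{\io|_n}$ and the latter shrinks to $y$. By construction $\jfi_0$ ends in $\ifi_0$, so $\cN(\jfi_0)=\cE$ by Lemma~\ref{maximal}. Setting $\cF:=\iota(y)$, condition (ii) is immediate: for $h\in\cE\setminus\cF$, $B_{\jfi_0}\cap h(K)\subseteq B(y,\delta)\cap h(K)=\emptyset$. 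For (i), the inclusion $K_{\jfi_0}\subseteq B(y,\delta)\cap K\subseteq h(K)$ holds for every $h\in\cF$; writing $h=\varphi_{\ifi_0}^{-1}\varphi_{\lfi_h}$ with $\lfi_h\in\Gamma^{|\ifi_0|}$ and applying $\varphi_{\ifi_0}$ gives the geometric inclusion $K_{\ifi_0\jfi_0}\subseteq K_{\lfi_h}$. The claim then reduces to showing that $\varphi_{\lfi_h}^{-1}\varphi_{\ifi_0\jfi_0}$ --- a contraction of ratio $\rho^{|\jfi_0|}$ mapping $K$ into itself --- equals a cylinder map $\varphi_{\jfi_h}$ with $\jfi_h$ ending in $\ifi_0$, so that $\cN(\jfi_h)=\cE$ follows from Lemma~\ref{maximal}.

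This last step is where I expect the main difficulty. My plan is to invoke Lemma~\ref{maximal} a second time: since $\ifi_0\jfi_0$ itself ends in $\ifi_0$, $\cN(\ifi_0\jfi_0)=\cE$ and every composition $\varphi_{\ifi_0\jfi_0}\circ f$ with $f\in\cE$ is a cylinder map $\varphi_{\mathbf{u}_f}$. Combining this identification with the sub-cylinder decomposition $K_{\lfi_h}=\bigcup_{|\mathbf{u}|=|\jfi_0|}K_{\lfi_h\mathbf{u}}$ and the control on overlap multiplicities provided by the weak separation condition should force $K_{\ifi_0\jfi_0}$ to coincide as a set with exactly one of the sub-cylinders $K_{\lfi_h\mathbf{u}}$, yielding $\jfi_h=\mathbf{u}$. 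To guarantee that $\jfi_h$ itself ends in $\ifi_0$, I would replace $\jfi_0$ by a word ending in $\ifi_0\ifi_0$ and descend one level further, so that the terminal $\ifi_0$ is propagated to $\jfi_h$ in the identification.
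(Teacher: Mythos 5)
Your setup (continuity point of the set-valued map $x\mapsto\{h\in\cE:\ x\in h(K)\}$, choosing $\jfi_0$ ending in $\ifi_0$ inside a small ball around that point) does give $\cN(\jfi_0)=\cE$ and condition (ii), and it correctly isolates the real issue. But the step you defer to the end is exactly the content of the claim, and your sketch for it does not close. From $K_{\jfi_0}\subseteq h(K)$, i.e. $K_{\ifi_0\jfi_0}\subseteq K_{\lfi_h}$, your plan is to decompose $K_{\lfi_h}$ into sub-cylinders $K_{\lfi_h\mathbf{u}}$ with $|\mathbf{u}|=|\jfi_0|$ and invoke Lemma \ref{maximal} at $\ifi_0\jfi_0$. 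What that lemma gives is only that every sub-cylinder meeting $B_{\ifi_0\jfi_0}$ satisfies $\varphi_{\lfi_h\mathbf{u}}=\varphi_{\ifi_0\jfi_0}\circ f$ for \emph{some} $f\in\cE$; it does not force $f=\mathrm{Id}$ for any $\mathbf{u}$. A priori $K_{\ifi_0\jfi_0}$ could be covered by several strictly offset neighbours that happen to be descendants of $\lfi_h$, with the exactly coinciding copy being a descendant of a different level-$|\ifi_0|$ cylinder; in that case $h^{-1}\circ\varphi_{\jfi_0}$ need not be a cylinder map at all, and condition (i) fails for this $h$. Set containment does not upgrade to the functional identity $h\circ\varphi_{\jfi_h}=\varphi_{\jfi_0}$ by the weak separation condition alone without a further argument, and "should force" is where the proof is missing. (The trailing remark about appending $\ifi_0\ifi_0$ to propagate the suffix to $\jfi_h$ has the same problem: the word $\jfi_h$ is produced only through this unproved identification, so you have no control over it, nor over $\cN(\jfi_h)$.)

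Compare with how the paper avoids this: it never starts from a set inclusion. Claim \ref{claim1} uses the maximality of $\cE$ to produce an \emph{exact} functional coincidence $\varphi_{\jfi}=h_1\circ\varphi_{\kfi}$ as soon as $h_1(K)$ meets $K_{\ifi_0}$ (the point being that every cylinder map meeting $B_{\ifi_0\ifi_0}$ at the right scale must lie in $\varphi_{\ifi_0\ifi_0}\cE$, hence \emph{is} of the form $\varphi_{\ifi_0\jfi}$). The word $\jfi_0$ is then built recursively, adding one $h_k$ at a time, and the key structural fact is that exact identities $\varphi_{\jfi_n}=h_k\circ\varphi_{\kfi_{n,k}}$ persist when one passes to descendants of $\jfi_n$ — which is what lets the construction also accommodate condition (ii) at the end by shrinking. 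Your containments do not enjoy this persistence/upgrade property, so to salvage your route you would need to prove the implication ``$K_{\jfi_0}\subseteq h(K)$ with $h\in\cE$ implies $h^{-1}\circ\varphi_{\jfi_0}$ is a cylinder map with maximal neighbourhood system'', which is essentially a restatement of the claim itself; I recommend adopting the recursive exact-coincidence construction instead.
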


\begin{proof}
In the case where $K_{\ifi_0} \cap h(K) = \emptyset$ for every $h \in \cE \setminus \lbrace {\rm Id} \rbrace$ the claim is close to trivial: set $\cF = \lbrace {\rm Id} \rbrace$ and choose $\jfi_0 \in \Gamma^*$ in such a way that $B_{\jfi_0} \cap \bigcup_{h \in \cE \setminus \cF} h(K) = \emptyset$ as we may since the union is closed. 

Suppose then that $K_{\ifi_0} \cap h_1(K) \neq \emptyset$ for some $h_1 \in \cE \setminus \lbrace {\rm Id} \rbrace$. The set $\cF$ and the word $\jfi_0$ are now found through the following recursive construction. First, we claim that a descendant of $h_1(K)$ overlaps exactly with a descendant of $K$.

\begin{claim}\label{claim1}
There exist finite words $\jfi, \kfi \in \Gamma^*$ such that
$$
\varphi_{\jfi} = h_1 \circ \varphi_{\kfi}.
$$
\end{claim}

\begin{proof}[Proof of Claim \ref{claim1}]
Since we assume $K_{\ifi_0} \cap h_1(K) \neq \emptyset$, there exists $\kfi \in \Gamma^{|\ifi_0|}$ such that $h_1(K_{\kfi}) \cap K_{\ifi_0} \neq \emptyset$. Because $h_1 \in \cE = \cN(\ifi_0)$, by definition of $\cN$ there exists $\ifi \in \Gamma^{|\ifi_0|}$ such that $h_1 = \varphi_{\ifi_0}^{-1} \circ \varphi_{ \ifi}$, so we have 
$$
\varphi_{\ifi_0}^{-1} \circ \varphi_{ \ifi \kfi}(K) \cap \varphi_{\ifi_0}(K) \neq \emptyset. 
$$
Mapping both sets by $\varphi_{\ifi_0}$, we can use Lemma \ref{maximal} to deduce that $\varphi_{\ifi\kfi} \in \varphi_{\ifi_0 \ifi_0} \cE$, or equivalently, $h_1 \circ \varphi_{\kfi} \in \varphi_{\ifi_0}\cE$.

Since $\cE$ consists of functions of the form $\varphi_{\ifi_0}^{-1} \circ \varphi_\jfi$ for $\jfi \in \Gamma^{|\ifi_0|}$, there exists $\jfi$ such that $h_1 \circ \varphi_{\kfi} = \varphi_{\ifi_0} \circ \varphi_{\ifi_0}^{-1} \circ \varphi_{\jfi} = \varphi_{\jfi}$. The claim is satisfied with the words $\jfi$ and $\kfi$. 
\end{proof}

Write $\jfi_1 := \jfi$ and $\kfi_{1,1} := \kfi$ for the words $\jfi$ and $\kfi$ given by Claim \ref{claim1}. 

Suppose now that for an integer $n \geq 1$ and every $1 \leq k \leq n$ we have found functions $h_k$ and words $\jfi_n, \kfi_{n,k}$ such that $\varphi_{\jfi_n} = h_{k} \circ \varphi_{\kfi_{n,k}}$ for every $k$. Now, if there exists a function $h_{n+1} \in \cE \setminus \lbrace {\rm Id}, h_1, \ldots, h_n \rbrace$ for which $K_{\jfi_n\ifi_0} \cap h_{n+1}(K) \neq \emptyset$, proceed as in the proof of Claim \ref{claim1} with $\ifi_0$ replaced by $\jfi_n \ifi_0$ to find words $\jfi, \kfi$ such that $\varphi_{\jfi_n \jfi} = h_{n+1} \circ \varphi_{\kfi} = h_k \circ \varphi_{\kfi_{n,k} \jfi}$ for every $1 \leq k \leq n$; the latter equality follows from the hypothesis on $\jfi_n$ and $\kfi_{n, k}$. Set $\jfi_{n+1} := \jfi_n \jfi$, $\kfi_{n+1, n+1} := \kfi$ and $\kfi_{n+1, k} := \kfi_{n, k} \jfi$ for each $1 \leq k \leq n$. 

Continue this process until we have reached an integer $n < \# \cE$ for which either $\cE = \lbrace {\rm Id}, h_1, \ldots, h_n \rbrace$ or $K_{\jfi_n\ifi_0} \cap h(K) = \emptyset$ for each $h \in \cE \setminus \lbrace {\rm Id}, h_1, \ldots, h_n \rbrace$. Write $\cF = \lbrace {\rm Id}, h_1, \ldots, h_n \rbrace$ and note that by construction of $\cF$, for every $h \in \cF$ there exists $\jfi_h' \in \Gamma^*$ such that $\varphi_{\jfi_n} = h \circ \varphi_{\jfi_h'}$. By passing onto a descendant of $\jfi_n$ we may assume that $B_{\jfi_n} \cap \bigcup_{h \in \cE \setminus \cF} h(K)=\emptyset$ since the union is closed. Finally, set $\jfi_0 = \jfi_n \ifi_0$ and $\jfi_h = \jfi_h' \ifi_0$ for each $h \in \cF$ to obtain the lemma.

\end{proof}

\begin{proof}[Proof of Proposition \ref{nu_p}]
We first consider the claimed representation for symbolic magnifications of $\mu$. Using self-similarity, we first write
$$
(\varphi_{\io|_k}^{-1}\mu)|_{B(0,1)} = \varphi_{\io|_k}^{-1} \sum_{\ifi \in \Gamma^k,\ K_{\ifi} \cap B_{\io|_k} \neq \emptyset} p_{\ifi} \cdot (\varphi_{\ifi} \mu)|_{B(0,1)}.
$$
Using the equality $\lbrace \varphi_{\ifi}:\ \ifi \in \Gamma^k,\ K_{\ifi} \cap B_{\io|_k} \rbrace = \varphi_{\io|_k} \cE$ which is Lemma \ref{maximal}, we can write the above measure as
\begin{equation}\label{reorder}
\sum_{f \in \cE} \left( \sum_{\ifi \in \Gamma^k,\ \varphi_{\ifi} = \varphi_{\io|_k} \circ f} p_{\ifi} \right)\cdot (f \mu)|_{B(0,1)}.
\end{equation}
Recall that $k$ is such that $\sigma^{k-|\ifi_0 \jfi_0|} \io \in [\ifi_0 \jfi_0]$. Writing $k' = k - |\jfi_0|$, we further reorder the above sum using the following claim. It is a simple consequence of Claim \ref{j_0} and says that each neighbour of $K_{\io|_k}$ is a descendant of $\varphi_{\io|_{k'}} \circ h(K)$ if and only if $h \in \cF$.

\begin{claim}\label{partition}
For each $f \in \cE$, we may write
$$
\lbrace \ifi \in \Gamma^*:\ \varphi_{\ifi} = \varphi_{\io|_k} \circ f \rbrace = \bigcup_{h \in \cF} \lbrace \jfi \lfi \in \Gamma^*:\ \varphi_{\jfi} = \varphi_{\io|_{k'}} \circ h,\ h \circ \varphi_{\lfi} = \varphi_{\jfi_0} \circ f \rbrace,
$$
where the union is disjoint. In particular, note that $\jfi$ and $\lfi$ can be chosen independently of each other.
\end{claim}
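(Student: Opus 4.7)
My plan is to prove the two inclusions separately. The direction $\supseteq$ is immediate by composition; indeed, if $\varphi_{\jfi} = \varphi_{\io|_{k'}} \circ h$ and $h \circ \varphi_{\lfi} = \varphi_{\jfi_0} \circ f$, then $\varphi_{\jfi\lfi} = \varphi_{\io|_{k'}} \circ h \circ \varphi_{\lfi} = \varphi_{\io|_{k'}} \circ \varphi_{\jfi_0} \circ f = \varphi_{\io|_k} \circ f$. For $\subseteq$, the strategy is to associate to every $\ifi$ on the left-hand side a canonical decomposition $\ifi = \jfi\lfi$ with $|\jfi| = k'$, from which the correct $h \in \cF$ can then be read off.

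Concretely, suppose $\varphi_{\ifi} = \varphi_{\io|_k} \circ f$ with $f \in \cE$. Since the IFS is equicontractive and elements of $\cE$ are isometries, matching ratios forces $|\ifi| = k$; set $\jfi := \ifi|_{k'}$ and $\lfi := \sigma^{k'}\ifi$ (of length $k-k' = |\jfi_0|$), and define $h := \varphi_{\io|_{k'}}^{-1} \circ \varphi_{\jfi}$. The relation $h \circ \varphi_{\lfi} = \varphi_{\io|_{k'}}^{-1} \circ \varphi_{\ifi} = \varphi_{\jfi_0} \circ f$ is then automatic, so everything reduces to showing $h \in \cF$. The key geometric observation is that since $f \in \cE = \cN(\jfi_0)$, we can write $f = \varphi_{\jfi_0}^{-1} \circ \varphi_{\kfi}$ with $\kfi \in \Gamma^{|\jfi_0|}$ and $K_{\kfi} \cap B_{\jfi_0} \neq \emptyset$; hence $\varphi_{\jfi_0}(f(K)) = K_{\kfi}$ sits inside $K \subset B(0,1)$ while simultaneously meeting $B_{\jfi_0}$. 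The inclusion into $B(0,1)$ yields $K_{\ifi} = \varphi_{\io|_{k'}}(K_{\kfi}) \subseteq B_{\io|_{k'}}$, so $K_{\jfi} \supseteq K_{\ifi}$ intersects $B_{\io|_{k'}}$. The hypothesis $\sigma^{k-|\ifi_0\jfi_0|}\io \in [\ifi_0\jfi_0]$ combined with $\jfi_0 = \jfi_n\ifi_0$ from the construction in Claim \ref{j_0} implies that $\io|_{k'}$ ends in $\ifi_0$, so Lemma \ref{maximal} applies to give $h \in \cN(\io|_{k'}) = \cE$. Finally, $h(K) \supseteq \varphi_{\io|_{k'}}^{-1}(K_{\ifi}) = K_{\kfi}$ meets $B_{\jfi_0}$, so Claim \ref{j_0}(ii) forces $h \in \cF$.

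Disjointness of the union follows from the same ratio-matching: within every branch $|\jfi| = k'$ is forced, so a word $\ifi = \jfi\lfi$ determines $\jfi = \ifi|_{k'}$ uniquely, and in turn $h = \varphi_{\io|_{k'}}^{-1} \circ \varphi_{\jfi}$ is uniquely determined by $\ifi$. The main obstacle is the verification $h \in \cF$: it dovetails the maximal-neighbourhood stability supplied by Lemma \ref{maximal} (which lifts $h$ to $\cE$) with the separation property (ii) of Claim \ref{j_0} (which refines $\cE$ down to $\cF$), both invoked through the engineered fact that $\jfi_0$ ends in $\ifi_0$.
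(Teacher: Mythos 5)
Your proof is correct and follows essentially the same route as the paper's: both hinge on the fact that $\io|_{k'}$ ends in $\ifi_0$, so that Lemma \ref{maximal} places $h=\varphi_{\io|_{k'}}^{-1}\circ\varphi_{\jfi}$ in $\cE$, and on Claim \ref{j_0}(ii) to rule out $h\in\cE\setminus\cF$. The only difference is presentational: you argue element-wise at the level of the maps $\varphi_{\ifi}$ (which also handles the ratio-matching and disjointness cleanly), whereas the paper works with the images $K_{\jfi\lfi}$ and their neighbour structure and converts back to map equalities at the end.
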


\begin{proof}[Proof of Claim \ref{partition}]

First we observe that $\varphi_{\io|_k} \circ f(K)$ coincides with a descendant of $K_{\io|_{k'}}$. Indeed, because $\io|_k = \io|_{k'} \jfi_0$, this is equivalent to $\varphi_{\jfi_0} \circ f(K)$ coinciding with a descendant of $K$, which in turn follows by Lemma \ref{maximal} because $f \in \cE = \cN(\jfi_0)$. From this we can deduce that in particular, any ancestor of $\varphi_{\io|_k} \circ f(K)$ must intersect $B_{\io|_{k'}}$. Using this and the fact that $\varphi_i$ are similarities, we may write 
\begin{align*}
\lbrace \ifi:\ \varphi_{\ifi} = \varphi_{\io|_k}\circ f \rbrace  &= \lbrace (\jfi, \lfi) \in \Gamma^{k'} \times \Gamma^{|\jfi_0|}:\ K_{\jfi \lfi} = \varphi_{\io|_k} \circ f(K)\rbrace \\
&=\lbrace (\jfi, \lfi) \in \Gamma^{k'} \times \Gamma^{|\jfi_0|}:\ K_{\jfi} \cap B_{\io|_{k'}} \neq \emptyset,\ K_{\jfi \lfi} = \varphi_{\io|_k} \circ f(K) \rbrace.
\end{align*}
On the other hand, because $\cN(\io|_{k'})=\cE$, each neighbour of $K_{\io|_{k'}}$ coincides with $\varphi_{\io|_{k'}} \circ h(K)$ for some $h \in \cE$, by Lemma \ref{maximal}. Finally, by the properties of $\jfi_0$ given by Claim \ref{j_0}, as a neighbour of $K_{\io|_k}$ the set $\varphi_{\io|_{k}}\circ f(K)$ coincides with a descendant of $\varphi_{\io|_{k'}} \circ h(K)$ for each $h \in \cF$, and for $h \not\in \cF$, 
$$
B_{\io|_k} \cap \varphi_{\io|_{k'}} \circ h(K) = \varphi_{\io|_{k'}}(B_{\jfi_0} \cap h(K)) = \emptyset.
$$
Thus, we may write
\begin{align*}
&\lbrace (\jfi, \lfi) \in \Gamma^{k'} \times \Gamma^{|\jfi_0|}:\ K_{\jfi} \cap B_{\io|_{k'}} \neq \emptyset,\ K_{\jfi \lfi} = \varphi_{\io|_k} \circ f(K) \rbrace \\
=\ &\bigcup_{h \in \cF} \lbrace (\jfi, \lfi) \in \Gamma^{k'} \times \Gamma^{|\jfi_0|}:\ K_{\jfi} = \varphi_{\io|_{k'}} \circ h(K),\ \varphi_{\io|_{k'}} \circ h \circ \varphi_{\lfi}(K) = \varphi_{\io|_k} \circ f(K) \rbrace.
\end{align*}
Using the fact that $\varphi_i$ are similarities, the above can again be written as
\begin{align*}
&\bigcup_{h \in \cF} \lbrace \jfi \lfi\in \Gamma^*:\ \varphi_{\jfi} = \varphi_{\io|_{k'}} \circ h,\ \varphi_{\io|_{k'}} \circ h \circ \varphi_{\lfi} = \varphi_{\io|_{k'} \jfi_0} \circ f \rbrace \\
=\ &\bigcup_{h \in \cF} \lbrace \jfi \lfi\in \Gamma^*:\ \varphi_{\jfi} = \varphi_{\io|_{k'}} \circ h,\ h \circ \varphi_{\lfi} = \varphi_{\jfi_0} \circ f \rbrace,
\end{align*}
which is what was claimed.
\end{proof}

Using Claim \ref{partition}, we write the sum \eqref{reorder} as

\begin{align}\label{almostrepresentation}
&\sum_{f \in \cE} \left( \sum_{\ifi \in \Gamma^k,\ \varphi_{\ifi} = \varphi_{\io|_k}\circ f} p_{\ifi} \right) \cdot (f \mu)|_{B(0,1)}\nonumber\\
=\ &\sum_{h \in \cF} \sum_{f \in \cE} \left( \sum_{\jfi \lfi \in \Gamma^*,\ \varphi_{\jfi} = h\circ\varphi_{\io|_{k'}},\ h\circ\varphi_{\lfi} = \varphi_{\jfi_0}\circ f} p_{\jfi\lfi} \right) \cdot (f \mu)|_{B(0,1)}\nonumber\\
=\ &\sum_{h \in \cF} \tilde{q}_h(\io|_k)\sum_{f \in \cE} \left( \sum_{\lfi \in \Gamma^*,\ h\circ\varphi_{\lfi} = h \circ \varphi_{\jfi_h}\circ f} p_{\lfi}  \right) \cdot( f \mu)|_{B(0,1)}\nonumber \\
=\ &\sum_{h \in \cF} \tilde{q}_h(\io|_k) \sum_{f \in \cE} \left( \sum_{\lfi \in \Gamma^*,\ \varphi_{\lfi} = \varphi_{\jfi_h}\circ f} p_{\lfi} \right) \cdot (f \mu)|_{B(0,1)}
\end{align}
where 
$$
\tilde{q}_h(\io|_k) = \sum_{\jfi \in \Gamma^{k'},\ \varphi_{\jfi} = h \circ\varphi_{\io|_{k'}}} p_{\jfi}. 
$$
On the third row, we have summed over $\jfi \in \Gamma^{k'}$ for every fixed $\lfi$ and used the equality $\varphi_{\jfi_0} = h \circ \varphi_{\jfi_h}$ for every $h \in \cF$, given by Claim \ref{j_0}.

Define now the functions
$$
\zeta_h = \sum_{f \in \cE} \left( \sum_{\lfi \in \Gamma^*,\ \varphi_{\lfi} = \varphi_{\jfi_h}\circ f} p_{\lfi} \right) \cdot \frac{d (f\mu)|_{B(0,1)}}{d\nu}
$$
for $h \in \cF$, where the last expression denotes the Radon-Nikodym derivative of $(f\mu)|_{B(0,1)}$ with respect to $\nu$. Because $\cN(\jfi_h) = \cE$, for every $h \in \cF$ we have 
$$
\zeta_h(x) \geq C_h := \min_{f \in \cE} \sum_{\lfi \in \Gamma^*,\ \varphi_{\lfi} = \varphi_{\jfi_h}\circ f} p_{\lfi} >0 
$$
for $\nu$-a.e. $x \in \bigcup_{f \in \cE} f(K) \cap B(0,1)$. Normalizing the coefficients $\tilde{q}_h(\io|_k)$ in \eqref{almostrepresentation}, writing $(q_h(\io|_k))_{h \in \cF}$ for the obtained probability vector and defining $\zeta := \zeta(\io|_k) = \sum_{h \in \cF} q_h(\io|_k) \zeta_h$, we obtain from \eqref{almostrepresentation} the representation
\begin{equation}\label{normalize}
\mu_{\io|_k} = (\zeta d\nu)_{B(0,1)}.
\end{equation}

An elementary calculation shows that $S_{k\log \rho^{-1}} T_{\io} = T_{\sigma^k \io} \varphi_{\io|_k}^{-1}$. For any $t > 0$ for which $B(\pi(\io), e^{-t}) \subseteq B_{\io|_k}$, using \eqref{normalize} we have
\begin{align*}
\mu_{\io, t} &= S^*_{t + k\log \rho} S_{-k \log \rho} T_{\io} \mu \\
&= S_{t+k \log \rho}^* T_{\sigma^k \io} \mu_{\io|_k}\\
&= (\zeta d\nu)_{\sigma^k \io, t + k\log \rho}
\end{align*}
which is what was claimed.
\end{proof}

\subsection{Proof of Proposition \ref{distribution_asymptotic}}

Proposition \ref{distribution_asymptotic} is a consequence of the uniform continuity of the function $\zeta \mapsto (\zeta d\nu)_{\io, t}$ which is established in the following lemmas. 

The first one is a simple application of the Lebesgue-Besicovitch differentiation theorem.

\begin{lemma}\label{conv_to_0}
If $\eta$ and $\gamma$ are Radon measures and $0 \neq \eta \ll \gamma$, then for $\eta$-almost every $x$, 
$$
\lim_{t \to \infty} \Vert \eta_{x,t} - \gamma_{x,t} \Vert = 0.
$$
\end{lemma}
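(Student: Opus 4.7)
The plan is to invoke the Lebesgue--Besicovitch differentiation theorem applied to the Radon--Nikodym density $f = d\eta/d\gamma$. Since $\eta \ll \gamma$ and $\eta \neq 0$, such an $f \in L^1(\gamma)$ exists, with $f(x) \in (0, \infty)$ for $\eta$-almost every $x$. The first step is to fix an $\eta$-typical $x$ so that $f(x) > 0$ and, by Lebesgue--Besicovitch applied to the Radon measure $\gamma$ on $\R^d$,
$$
\epsilon_t(x) := \frac{1}{\gamma(B(x, e^{-t}))} \int_{B(x, e^{-t})} |f(y) - f(x)| \, d\gamma(y) \longrightarrow 0 \qquad (t \to \infty).
$$

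Next, I would compare $\eta_{x,t}$ and $\gamma_{x,t}$ directly from their definitions: for a measurable set $A \subseteq B(0,1)$, writing $B_t = B(x, e^{-t})$ and $E_t = e^{-t}A + x \subseteq B_t$,
$$
\eta_{x,t}(A) = \frac{\int_{E_t} f \, d\gamma}{\int_{B_t} f \, d\gamma}, \qquad \gamma_{x,t}(A) = \frac{\gamma(E_t)}{\gamma(B_t)}.
$$
Splitting both numerator and denominator of the first ratio as $f(x) \cdot \gamma(\cdot) + \int (f - f(x)) \, d\gamma$ produces error terms bounded in absolute value by $\epsilon_t(x) \gamma(B_t)$. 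A short algebraic manipulation of the resulting ratio should then yield, uniformly in $A$, an estimate of the shape
$$
|\eta_{x,t}(A) - \gamma_{x,t}(A)| \leq \frac{2 \epsilon_t(x)}{f(x) - \epsilon_t(x)},
$$
which tends to $0$ as $t \to \infty$ since $f(x) > 0$. The uniformity in $A \subseteq B(0,1)$ then immediately promotes this to a total variation bound.

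I do not anticipate a serious obstacle: the content of the lemma is simply that two mutually absolutely continuous measures have the same local geometry at almost every point, which is precisely what Lebesgue--Besicovitch provides. The only care needed is to ensure $f(x) > 0$, which holds $\eta$-almost everywhere by $\eta \ll \gamma$ and $\eta \neq 0$, and to note that Lebesgue--Besicovitch applies to the Radon measure $\gamma$ on $\R^d$ through the Besicovitch covering theorem.
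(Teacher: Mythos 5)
Your argument is correct, but it is worth noting that the paper does not actually prove this lemma at all: it simply cites Hochman, \emph{Dynamics on fractals and fractal distributions}, Proposition 3.8. Your proposal supplies a self-contained proof along the standard lines, and the details check out: $f=d\eta/d\gamma$ is strictly positive $\eta$-a.e., the Besicovitch covering theorem gives the Lebesgue-point property of $f$ at $\gamma$-a.e.\ (hence, by absolute continuity, $\eta$-a.e.) point, and with $E_t=e^{-t}A+x\subseteq B_t=B(x,e^{-t})$ the splitting of numerator and denominator indeed yields
$$
\bigl|\eta_{x,t}(A)-\gamma_{x,t}(A)\bigr|
=\frac{|e_1\gamma(B_t)-e_2\gamma(E_t)|}{\bigl(f(x)\gamma(B_t)+e_2\bigr)\gamma(B_t)}
\leq \frac{2\epsilon_t(x)}{f(x)-\epsilon_t(x)}
$$
once $\epsilon_t(x)<f(x)$, which holds for all large $t$. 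Two small points deserve a sentence in a written-up version: the differentiation theorem is applied at $\gamma$-typical points, so you should say explicitly that the exceptional $\gamma$-null set is also $\eta$-null because $\eta\ll\gamma$ (and that $\gamma(B_t)>0$ for such points); and a bound uniform over measurable $A\subseteq B(0,1)$ controls the total variation norm only up to a factor $2$, which of course does not affect the conclusion. With those remarks your proof is complete and arguably preferable to the bare citation, since it keeps the paper self-contained; it is essentially the same differentiation argument that underlies the cited result of Hochman.
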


\begin{proof}
See \cite[Proposition 3.8]{Hochman}.
\end{proof}

Recall that the set $\cW$ consists of all the functions $\zeta = \zeta(\io|_k)$ given by Proposition \ref{nu_p} for $\io \in \Gamma^\N$, $k \in \N$, and that $\zeta \geq C := \min_{h \in \cF} C_h > 0$ for each $\zeta \in \cW$. Write
$$
\cA = \bigcap_{\zeta \in \cW} \lbrace \io \in \Gamma^\N:\ \lim_{t \to \infty} \Vert \mu_{\io, t} - (\zeta d\nu)_{\io, t} \Vert = 0 \rbrace.
$$
Since $\mu \ll \zeta d\nu$ and $\cW$ is countable, we deduce from Lemma \ref{conv_to_0} that $\bar{\mu}(\cA) = 1$. In the following, the lower bound for the functions in $\cW$ comes to play: while the previous lemma asserts that $\mu_{\io, t}$ and $(\zeta d\nu)_{\io, t}$ have similar asymptotic behaviour in $t$, the following two lemmas show that this similarity is actually uniform over $\cW$, when $\io$ is given. 

\begin{lemma}\label{t_to_0}
For $\io \in \cA$ and $t \geq 0$, the function $f^{\io, t}: \cW \to \R$,
$$
\zeta \mapsto \Vert \mu_{\io, t} - (\zeta d\nu)_{\io,t} \Vert
$$
is continuous (when $\cW$ is equipped with the metric induced by the supremum norm) and the modulus of continuity is independent of $\io$ and $t$. 
\end{lemma}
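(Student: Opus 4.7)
The plan is to exhibit a Lipschitz bound for $f^{\io,t}$ with a constant depending only on the uniform lower bound $C = \min_{h \in \cF} C_h > 0$ enjoyed by all functions in $\cW$. Since $|f^{\io,t}(\zeta_1) - f^{\io,t}(\zeta_2)| \leq \Vert (\zeta_1 d\nu)_{\io,t} - (\zeta_2 d\nu)_{\io,t}\Vert$ by the reverse triangle inequality, the lemma reduces to showing that the map $\zeta \mapsto (\zeta d\nu)_{\io,t}$ is uniformly Lipschitz from $(\cW,\Vert \cdot \Vert_\infty)$ to $(\mathcal{P}(B(0,1)),\Vert \cdot \Vert)$.

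Writing $B = B(\pi(\io), e^{-t})$, $\eta_i = \zeta_i d\nu$ and $c_i = \eta_i(B)$, I would observe that $(\eta_i)_{\io,t}$ is obtained by restricting $\eta_i$ to $B$, normalizing by $c_i$, and pushing forward by $S_t \circ T_{\pi(\io)}$; since pushforward by an injection is an isometry for total variation, it suffices to estimate $\Vert \eta_1|_B/c_1 - \eta_2|_B/c_2\Vert$. Splitting this difference with a triangle inequality,
\begin{equation*}
\Vert \eta_1|_B/c_1 - \eta_2|_B/c_2 \Vert \leq \Vert \eta_1|_B\Vert \cdot |1/c_1 - 1/c_2| + \Vert \eta_1|_B - \eta_2|_B\Vert/c_2 = \frac{|c_1-c_2|}{c_2} + \frac{\Vert \eta_1|_B - \eta_2|_B\Vert}{c_2}.
\end{equation*}
Both $|c_1 - c_2|$ and $\Vert \eta_1|_B - \eta_2|_B\Vert$ are bounded by $\Vert \zeta_1 - \zeta_2\Vert_\infty \cdot \nu(B)$, since $\eta_1 - \eta_2 = (\zeta_1-\zeta_2)\,d\nu$.

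The key input is then the uniform lower bound: because $\mathrm{Id} \in \cE$, we have $\mu \leq \nu$ so $\nu(B) > 0$ for $\pi(\io) \in \mathrm{spt}\,\mu$ and every $t \geq 0$; moreover $\zeta_2 \geq C$ gives $c_2 \geq C \nu(B)$, hence $\nu(B)/c_2 \leq 1/C$. Combining these estimates yields the uniform bound
\begin{equation*}
\Vert (\zeta_1 d\nu)_{\io,t} - (\zeta_2 d\nu)_{\io,t} \Vert \leq \frac{2}{C} \Vert \zeta_1 - \zeta_2 \Vert_\infty,
\end{equation*}
which is the desired modulus of continuity, independent of $\io$ and $t$.

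The only subtlety worth checking carefully is that $\nu(B) > 0$ and $c_2 > 0$ for every relevant pair $(\io,t)$, which follows from $\mathrm{Id} \in \cE$ together with $\pi(\io) \in \mathrm{spt}\,\mu$; the rest is a routine split of the ratio and linearity of the map $\zeta \mapsto \int \zeta\,d\nu$.
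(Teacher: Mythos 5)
Your proof is correct and takes essentially the same approach as the paper: both reduce the statement, via the reverse triangle inequality, to bounding $\Vert(\zeta_1\,d\nu)_{\io,t}-(\zeta_2\,d\nu)_{\io,t}\Vert$, and both rely on the uniform lower bound $\zeta\geq C$ to keep the normalizing mass over $B(\pi(\io),e^{-t})$ under control. The only difference is cosmetic: you split the difference of normalized restrictions additively and obtain the explicit Lipschitz constant $2/C$, whereas the paper runs an $\varepsilon$--$\delta$ argument through the multiplicative comparison $\zeta_i\in[(1-\varepsilon)\zeta_j,(1+\varepsilon)\zeta_j]$; the key input is identical.
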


\begin{proof}

Fix $\io$ and $t$. Let $\varepsilon > 0$ be given and let $\delta > 0$ be such that whenever $\zeta_1 ,\zeta_2 \in \cW$ and $\Vert \zeta_1 - \zeta_2 \Vert_\infty < \delta$, we have $\zeta_i \in [(1-\varepsilon) \zeta_j, (1 + \varepsilon) \zeta_j]$ for $i, j \in \lbrace 1, 2 \rbrace$. Choosing such a $\delta$ is possible because the functions $\zeta_1, \zeta_2$ are bounded from below by $C > 0$. 

Now, if $\Vert \zeta_1 - \zeta_2\Vert_\infty < \delta$, we have for $i, j \in \lbrace 1,2\rbrace$ and all measurable $A \subseteq B(0,1)$, 

\begin{align*}
(\zeta_i d\nu)_{\io, t}(A) - (\zeta_j d\nu)_{\io, t}(A) &= \frac{\int_{e^{-t}A + \pi(\io)} \zeta_i \,d\nu}{\int_{B(\pi(\io), e^{-t})} \zeta_i \,d\nu} - \frac{\int_{e^{-t}A + \pi(\io)} \zeta_j \,d\nu}{\int_{B(\pi(\io), e^{-t})} \zeta_j \,d\nu} \\
&\leq \frac{\int_{e^{-t}A + \pi(\io)} \left((1+\varepsilon)\zeta_i - (1-\varepsilon)\zeta_i\right) \,d\nu}{\int_{B(\pi(\io), e^{-t})} (1+\varepsilon)\zeta_i \,d\nu} \\
&\leq \frac{2\varepsilon}{1+\varepsilon}.
\end{align*}
Thus
$$
|f(\zeta_1, t, \io) - f(\zeta_2, t, \io)| \leq \Vert (\zeta_1 d\nu)_{\io, t} - (\zeta_2 d\nu)_{\io, t} \Vert < 2\varepsilon.
$$
Since $\varepsilon$ was arbitrary and $\delta$ depends only on $\varepsilon$ and $C$, this completes the proof.

\end{proof}

\begin{lemma}\label{uniform_convergence}
Let $X$ be a subset of a compact metric space and let $f$ be a function on $X \times \R$ such that $x \mapsto f(x,t)$ is continuous, with modulus of continuity independent of $t$, and for every $x$, $\lim_{t \to \infty} f(x,t) = 0$. Then for every $\varepsilon > 0$, 
$$
\sup_{x \in X} \inf \lbrace t_0:\ f(x,t) \leq \varepsilon\ \text{for every}\ t \geq t_0 \rbrace < \infty.
$$
\end{lemma}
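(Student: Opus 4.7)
The plan is a standard compactness-plus-uniform-continuity argument: total boundedness of $X$ (inherited from the ambient compact metric space) combined with the common modulus of continuity lets us reduce the pointwise convergence hypothesis to finitely many points.

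More precisely, fix $\varepsilon > 0$ and let $\omega$ denote a common modulus of continuity, so $|f(x,t) - f(y,t)| \leq \omega(d(x,y))$ for all $x,y \in X$ and all $t \in \R$. Choose $\delta > 0$ with $\omega(\delta) < \varepsilon/2$. Since $X$ sits inside a compact metric space it is totally bounded, so I can cover it by finitely many balls $B(x_1, \delta), \ldots, B(x_n, \delta)$ with centers $x_i \in X$. For each $i$, the hypothesis $\lim_{t \to \infty} f(x_i, t) = 0$ gives a threshold $T_i$ such that $f(x_i, t) < \varepsilon/2$ whenever $t \geq T_i$, and I set $T = \max_{1 \leq i \leq n} T_i$.

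Now for an arbitrary $x \in X$, pick an index $i$ with $d(x, x_i) < \delta$. The common modulus of continuity yields
$$
f(x,t) \leq f(x_i, t) + \omega(d(x,x_i)) < f(x_i, t) + \varepsilon/2
$$
for every $t$, and combining this with the choice of $T$ gives $f(x,t) < \varepsilon$ for all $t \geq T$. Hence $\inf\lbrace t_0 :\ f(x,t) \leq \varepsilon\ \text{for all}\ t \geq t_0\rbrace \leq T$ uniformly in $x \in X$, which is the desired bound. There is no real obstacle beyond confirming that total boundedness is indeed what lets one promote the pointwise convergence into a single uniform threshold; one does not need $X$ itself to be closed or compact, only to be covered by finitely many balls around centers in $X$.
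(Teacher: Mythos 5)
Your proof is correct, and it takes a somewhat different route from the paper's. You argue directly: total boundedness of $X$ (inherited from the ambient compact space) gives a finite $\delta$-net $x_1,\dots,x_n \in X$, pointwise convergence at each $x_i$ gives thresholds $T_i$, and the $t$-independent modulus of continuity transfers the bound to every $x \in X$, so the uniform threshold is exhibited explicitly as $T = \max_i T_i$. The paper instead argues by contradiction: assuming the supremum is infinite it produces $x_n \in X$ and $t_n \geq n$ with $f(x_n,t_n) > c$, extracts a Cauchy subsequence $(y_n)$ (Cauchy rather than convergent, precisely because $X$ need not be closed), and contradicts the uniform modulus by comparing $f(y_n,t_n) > c$ with $f(y_N,t_n) < c/2$ for a fixed nearby $y_N$. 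Both arguments rest on exactly the same two ingredients -- total boundedness plus equicontinuity in $x$ uniform in $t$ -- so the difference is one of packaging: yours is constructive and makes the uniform threshold explicit, the paper's is shorter to state. Two cosmetic points in your write-up: deducing $\omega(d(x,x_i)) < \varepsilon/2$ from $d(x,x_i) < \delta$ implicitly takes the modulus nondecreasing (equivalently, use the $\varepsilon$--$\delta$ form of uniform equicontinuity), which is harmless; and since $f$ is not assumed nonnegative in the abstract statement, the limit hypothesis should be read as giving $|f(x_i,t)| < \varepsilon/2$ for $t \geq T_i$, which your inequality chain then still supports (in the paper's application $f$ is a total variation distance, hence nonnegative anyway).
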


\begin{proof}
Suppose otherwise, that there exists $c > 0$ with the property that for every $n \in \N$, there exists $x_n \in X$ and $t_n \geq n$ such that $f(x_n, t_n) > c$. Let $(y_n)_{n \in \N} \subseteq X$ be a Cauchy subsequence of $(x_n)_{n \in \N}$. Now, for every $\delta > 0$ there exist $N, M \in \N$ such that $|y_n - y_N| < \delta$, $f(y_N, t_n) < c/2$ and $f(y_n, t_n) > c$ for all $n \geq M$. Since the continuity of $x \mapsto f(x, t)$ was assumed to be uniform in $t$, this is a contradiction.
\end{proof}

We are now able to deduce Proposition \ref{distribution_asymptotic}. 

\begin{proof}[Proof of Proposition \ref{distribution_asymptotic}]

As a closed and bounded subset of the linear span of a finite family of functions, the closure of $\cW$ is compact. For $\io \in \cA$ and $t > 0$, let $f^{\io,t}$ denote the function of Lemma \ref{t_to_0}. Now, for each $\io \in \cA$ and $\varepsilon > 0$, apply Lemma \ref{uniform_convergence} for the function $(\zeta, t) \mapsto f^{\io, t}(\zeta)$ to obtain the existence of an integer $N(\io, \varepsilon) < \infty$ such that $\Vert \mu_{\io, t} - (\zeta d\nu)_{\io, t} \Vert < \varepsilon$ for every $\zeta \in \cW$ and $t \geq N(\io, \varepsilon)$. Since $\bar{\mu}( \bigcup_{N \in \N} \lbrace \io \in \cA:\ N(\io, \varepsilon) \leq N \rbrace ) = \bar{\mu}(\cA) =1$, we may choose an integer $N'$ so that if $\mathcal{J} = \lbrace \io \in \Gamma^\N:\ \Vert \mu_{\io, t} - (\zeta d\nu)_{\io, t}\Vert < \varepsilon\ \text{for all}\ t \geq N' \rbrace$, we have $\bar{\mu}(\cJ) > 0$. 

To obtain the statement of the proposition, choose $N$ large enough with respect to $N'$ that 
$$
d \left( \frac{1}{T} \int_0^T \delta[\mu_{\io, t}]\,dt,\ \frac{1}{T} \int_0^T \delta[(\zeta d\nu)_{\io, t}] \,dt \right) < \varepsilon
$$
for all $T \geq N$ and $\io \in \cJ$. That such an $N$ exists can be easily seen from the definition of the Prokhorov metric.
 
\end{proof}

\section{Discussion}\label{Discussion}	

We now recall the definition of a fractal distribution, and afterwards provide the short deduction of the second assertion of Theorem \ref{uniformly_scaling}.

Following the terminology of Hochman from \cite{Hochman}, \emph{fractal distributions} are distributions which are $S^*$-invariant and possess a spatial invariance property called \emph{quasi-Palm}. They generalize the notion of CP-distributions introduced by Furstenberg to a coordinate-free setting, and, in the context of scenery flows, the well-known principle of Preiss that ``tangent measures to tangent measures are tangent measures''.

\begin{definition}\label{fractaldistribution}
Let $P$ be a distribution on the space of probability measures on $B(0,1)$. We say that $P$ is \emph{quasi-Palm} if for any measurable $\cA$, $P(\cA) = 1$ if and only if for every $r > 0$, $P$-almost every $\nu$ satisfies
$$
\nu_{x,r} \in \cA
$$
for $\nu$-almost every $x$ with $B(x,e^{-r}) \subseteq B(0,1)$. 

A \emph{fractal distribution} is a distribution which is $S^*$-invariant and quasi-Palm. If the distribution is also ergodic with respect to $S^*$, we say that it is an \emph{ergodic fractal distribution}.
\end{definition}

\begin{remark}
This definition coincides with Hochman's definition of a \emph{restricted} fractal distribution in \cite{Hochman}, and this particular formulation was used by Hochman and Shmerkin in \cite{HS_equidistribution}. The definition of a fractal distribution in \cite{Hochman} considers distributions supported on all Radon measures of $\R^d$ whose support contains the origin, and does not require the condition $B(x, e^{-r}) \subseteq B(0,1)$. Since we only consider distributions arising from the scenery flow, the restricted versions are more relevant in our context. However, since it was shown in \cite{Hochman} that each restricted fractal distribution extends uniquely to a fractal distribution on the space of all Radon measures, the results of \cite{Hochman} that we use also apply with the above definition.
\end{remark}

We will now explain how to deduce the second assertion of Theorem \ref{uniformly_scaling} from the results of \cite{Hochman}. Suppose that $\mu$ is a self-similar measure satisfying the hypothesis of Theorem \ref{specialcase}: With the modifications highlighted in Subsection \ref{subsection}, the following goes through also in the setting of Theorem \ref{uniformly_scaling}.

Let $P$ denote the limit of the scenery flow of $\mu$. It is due to Hochman \cite[Theorem 1.7]{Hochman} that $P$ is a fractal distribution. It is not difficult to see from the representation \eqref{givesmass} that $P$ gives positive mass to the closed set 
\begin{equation}\label{set}
\lbrace (\zeta d\nu)_{\io, t}:\ \zeta \in \overline{\cW},\ \io \in \Gamma^\N,\ 0 \leq t \leq R \rbrace
\end{equation}
for large enough $R$. In particular, there exists an ergodic component $P'$ of $P$ which gives positive mass to the set \eqref{set}. Because ergodic components of fractal distributions are also fractal distributions by \cite[Theorem 1.3]{Hochman}, $P'$ is an ergodic fractal distribution. From the ergodic theorem and the quasi-Palm property it is not difficult to see that typical measures for $P'$ are uniformly scaling and generate $P'$. In particular, a measure in the set \eqref{set} generates $P'$. Finally, since each of these measures contains as an absolutely continuous component a translated and scaled copy of $\mu$, Lemma \ref{conv_to_0} allows us to deduce that the distribution generated by $\mu$ also equals $P'$. In particular, $\mu$ generates an ergodic fractal distribution.

\begin{remark}
It suffices to consider any accumulation point of the scenery flow in place of $P$ in the preceding paragraph. This way, after observing from \eqref{givesmass} that an accumulation point is supported on measures of the form \eqref{set}, one can infer another proof of Theorem \ref{uniformly_scaling} that relies on the deep results of \cite{Hochman} instead of the more hands-on geometric analysis of Proposition \ref{nu_p}.
\end{remark}

\subsection{Projections of Markov measures}

Theorem \ref{uniformly_scaling} extends for natural projections of ergodic Markov measures on $\Gamma^\N$. Write $\bar{\mu}$ for such a measure on $\Gamma^\N$ and $\mu:=\pi\bar{\mu}$ for its natural projection on the self-similar set. We go through the main observations one must make in addition to the ones in the Bernoulli case; details are left to the interested reader.

Let $\Sigma \subseteq \Gamma^\N$ denote the subshift of finite type associated to $\bar{\mu}$, and $\Sigma^*$ the collection of its finite words. First we note that $\#\lbrace \mu^{\ifi}:= \pi(\bar{\mu}_{[\ifi]}):\ \ifi \in \Sigma^* \rbrace \leq (\#\Gamma)^2 < \infty$ and that for any $\jfi \in \Sigma^*$, $\mu^{\ifi \jfi} = \mu^{\lfi \jfi}$ for all $\ifi, \lfi \in \Sigma^*$ for which $\ifi \jfi, \lfi \jfi \in \Sigma^*$. These are immediate from the Markov property of $\bar{\mu}$. The definition of the neighbourhood system of $\ifi \in \Sigma^*$ is replaced by
$$
\cN(\ifi) := \lbrace \varphi_{\ifi}^{-1} \circ \varphi_{\jfi}:\ \jfi \in \Sigma^*,\ \rho_{\jfi} \leq \rho_{\ifi} \leq \rho_{\jfi^-},\ K_{\jfi} \cap B_{\ifi} \neq \emptyset \rbrace.
$$
Arguing similarly as in the proof of Lemma \ref{maximal}, using the weak separation condition and finiteness of the collection $\cM := \lbrace \mu^{\ifi}:\ \ifi \in \Sigma^* \rbrace$, we see that there exists a word $\ifi_0' \in \Sigma^*$ such that for any $\jfi$ for which $\jfi \ifi_0' \in \Sigma^*$, $\cN(\jfi \ifi_0') = \cN(\ifi_0') =: \cE$ and 
\begin{equation}\label{Markovcondition}
\mu_{\jfi \ifi_0'} \sim \mu_{\ifi_0'}.
\end{equation} 
Recall \eqref{symbolicmag} for the definition of the symbolic magnifications $\mu_{\ifi}$. The equivalence \eqref{Markovcondition} follows from the condition $\cN(\jfi \ifi_0') = \cN(\ifi_0')$ when $\bar{\mu}$ is Bernoulli, and allows us to compare the scenery measures to the uniform ``frame'' $\nu := \mu_{\ifi_0'}$ as in the case of the Bernoulli measure. Now, if the collection $\cF \subseteq \cE$ and the words $\jfi_h, \jfi_0 \in \Sigma^*$ are as in Claim \ref{j_0}, we can go through the proof of Proposition \ref{nu_p} to obtain the same result with $\zeta(\io|_k)$ being a convex combination of the functions $0<\zeta_h^\eta = \frac{d\eta_{\jfi_h}}{d\nu}<\infty$, where $h \in \cF$ and $\eta \in \cM$ is such that $\eta_{\jfi_h} \neq 0$. The ergodic-theoretic work in Subsection \ref{ergodicpart} also goes through, as all properties of Bernoulli measures we use are shared by ergodic Markov measures. The ergodicity of the skew-product system we require in the rotating case follows from \cite[Theorem 2.2]{Kim}.

\subsection{Pointwise normality}
In \cite[Theorem 1.4]{HS_equidistribution}, Hochman and Shmerkin proved that if $\Phi = \lbrace \varphi_i(x) = \rho_i x + a_i \rbrace_{i \in \Gamma}$ is a self-similar IFS on the real line with the open set condition and if $s>1$ is a Pisot number such that $\frac{\log s}{\log \rho_i} \not\in \Q$ for some $i \in \Gamma$, then any self-similar measure associated to $\Phi$ is pointwise $s$-normal. The open set condition was required partly because of the uniform scaling assumption.

Combining Theorem \ref{uniformly_scaling} with the methods of Hochman-Shmerkin, we obtain the following:

\begin{corollary}\label{equidistribution}
Let $\Phi = \lbrace \varphi_i(x) = \rho_i x + a_i \rbrace_{i \in \Gamma}$ be a self-similar IFS on $\R$ satisfying the weak separation condition, and let $\mu$ be a self-similar measure associated to $\Phi$. If $s > 1$ a Pisot number and $\frac{\log s}{\log \rho_i} \not\in \Q$ for some $i \in \Gamma$, then $\mu$ is pointwise $s$-normal.
\end{corollary}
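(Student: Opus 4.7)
The plan is to deduce Corollary~\ref{equidistribution} by combining Theorem~\ref{uniformly_scaling} with the general pointwise Pisot normality criterion of Hochman and Shmerkin recalled in the introduction: if $\mu$ is uniformly scaling, generates an ergodic fractal distribution $P$, and the pure-point spectrum of $P$ contains no non-zero integer multiple of $\frac{1}{\log s}$, where $s > 1$ is Pisot, then $\mu$ is pointwise $s$-normal.

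First, by Theorem~\ref{uniformly_scaling}, the scenery flow of $\mu$ converges almost everywhere to an ergodic fractal distribution $P$. This reduces the corollary to verifying the spectral condition on $P$: that for every non-zero integer $n$, $\frac{n}{\log s}$ is not an eigenvalue of the $S^*$-action on $(\mathcal{P}(\mathcal{P}(B(0,1))), P)$.

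The main work is therefore the spectral analysis of $P$. I would carry it out by exploiting the construction of $P$ in Section~\ref{The scenery flow}, where $P$ arises as a limit of Birkhoff averages over the base system $(\Gamma^\N, \bar\mu, \sigma)$ (resp.\ its skew-product with the rotation group $G$ in the general case), with a roof-type function built from the logarithms of the contractions $\log \rho_j^{-1}$ appearing along orbits. The fix $i \in \Gamma$ given by the hypothesis is the one we will use. An eigenfunction $F$ with eigenvalue $\alpha$, $F(S^*_t \nu) = e^{2\pi i \alpha t} F(\nu)$ for $P$-a.e.\ $\nu$ and all $t \geq 0$, combined with the self-similarity $\mu = \sum_j p_j \varphi_j \mu$ and the representation of typical $P$-measures as scaled copies of restrictions of $\mu$ given by Proposition~\ref{nu_p}, yields a cohomological equation forcing $\alpha \log \rho_i^{-1} \in \Z$ (modulo a coboundary on the base system). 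Setting $\alpha = \frac{n}{\log s}$ with $n \neq 0$ then gives $\frac{n \log \rho_i^{-1}}{\log s} \in \Z$, i.e.\ $\frac{\log s}{\log \rho_i} \in \Q$, contradicting the hypothesis.

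The hard step is of course extracting the cohomological relation rigorously from the construction of $P$. A cleaner route, which I would prefer, is to observe that this spectral analysis has already been carried out by Hochman and Shmerkin in \cite{HS_equidistribution} for self-similar measures satisfying the open set condition: their argument only uses the self-similarity relation together with the fact that $\mu$ generates an ergodic fractal distribution, and is insensitive to any separation condition on the cylinders. Since Theorem~\ref{uniformly_scaling} supplies exactly these ingredients under the weak separation condition, their spectral statement transfers verbatim to our setting. Given the spectral condition, Corollary~\ref{equidistribution} follows at once from the Hochman--Shmerkin pointwise normality criterion.
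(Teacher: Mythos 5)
Your overall strategy (reduce to the Hochman--Shmerkin normality criterion for uniformly scaling measures generating an ergodic fractal distribution) is the right starting point, but the step you call ``clean'' contains the actual gap. You assert that the spectral/phase analysis carried out in \cite{HS_equidistribution} for self-similar measures ``only uses the self-similarity relation together with the fact that $\mu$ generates an ergodic fractal distribution, and is insensitive to any separation condition,'' so that it transfers verbatim. This is not justified, and it is exactly where the open set condition is used a second time in \cite{HS_equidistribution}: besides giving the uniform scaling property, the OSC is used to show that, for any eigenfunction of the generated distribution $P$, the \emph{phase measure} of $\mu$ is an atom, and that argument exploits the homogeneity available under the OSC (typical measures for $P$ are scaled, restricted copies of $\mu$ itself). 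Under the weak separation condition this homogeneity genuinely fails, so ``transfers verbatim'' skips precisely the content that must be proved. The paper's proof supplies the missing bridge: from the construction in the proof of Theorem \ref{uniformly_scaling}, $P$ gives positive mass to the set \eqref{set} of measures of the form $(\zeta\,d\nu)_{\io,t}$ with $\nu$ as in \eqref{nudefinition}; by \cite[Proposition 4.15]{HS_equidistribution} (which is separation-free) $P$-almost every measure has atomic phase, hence some $\eta=(\zeta\,d\nu)_{\io,t}$ does; since $\mu_{\io,t}\ll\eta$, self-similarity yields an affine map $f$ with $f\mu\ll\eta$, and \cite[Corollary 4.17]{HS_equidistribution} transfers atomicity first to $f\mu$ and then, via $f^{-1}$, to $\mu$. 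Only after this does one run the rest of the proof of \cite[Theorem 1.4]{HS_equidistribution}, where the hypothesis $\frac{\log s}{\log\rho_i}\notin\Q$ enters. Some argument of this kind is indispensable in your write-up.

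Your fallback route (a cohomological equation forcing $\alpha\log\rho_i^{-1}\in\Z$ for every eigenvalue $\alpha$ of $P$, hence emptiness of the resonant spectrum and a direct application of the criterion quoted in the introduction) is also not a proof as it stands. Under the weak separation condition $P$ is \emph{not} exhibited as a factor of a suspension flow over $(\Gamma^\N,\bar\mu,\sigma)$ with roof $\log\rho_{i_0}^{-1}$; the absence of such a symbolic representation is the whole difficulty of the paper, and $P$ only arises as a limit of the approximating averages. Moreover, Proposition \ref{nu_p} does not represent typical $P$-measures as scaled copies of restrictions of $\mu$: it represents scenery measures as normalized densities $\zeta\,d\nu$ against the fixed reference measure $\nu=\sum_{f\in\cE}f\mu$, which is a finite sum of copies of $\mu$ with overlaps. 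Even granting a suspension picture (as in the strongly separated case), passing from a measurable eigenfunction equation to the integrality $\alpha\log\rho_i^{-1}\in\Z$ requires a Livšic-type rigidity argument and periodic-orbit obstructions that you do not indicate; the phrase ``modulo a coboundary on the base system'' hides all of this. So either route, as written, leaves the essential step unproved.
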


\begin{proof}
In the proof of \cite[Theorem 1.4]{HS_equidistribution}, the open set condition was only used to deduce that (i) $\mu$ is uniformly scaling and generates an ergodic fractal distribution $P$, and that (ii) given an eigenfunction of $P$, the phase measure of $\mu$ is an atom. Recall \cite[Section 4.3]{HS_equidistribution} for the definitions of an eigenfunction and a phase measure. Since (i) is true by Theorem \ref{uniformly_scaling}, we will only explain how to deduce that the phase measure of $\mu$ is an atom in the setting of Corollary \ref{equidistribution}; after this, the proof proceeds exactly as the proof of \cite[Theorem 1.4]{HS_equidistribution}.

It follows from \cite[Proposition 4.15]{HS_equidistribution} that given any eigenfunction, the phase measure of $P$-almost every $\eta$ is an atom. Since $P$ gives positive mass to the set \eqref{set}, it follows that for some $\zeta \in \overline{\cW}, \io \in \Gamma^\N$ and $t \geq 0$, the phase measure of $\eta = \zeta \,d\nu_{\io, t}$ is an atom. Since $\mu_{\io, t} \ll \eta$, using the self-similarity of $\mu$ we find a linear map $f$ such that $f\mu \ll \eta$. By \cite[Corollary 4.17, part 1]{HS_equidistribution}, the phase measure of $f\mu$ is also an atom, and by applying \cite[Corollary 4.17, part 2]{HS_equidistribution} to the image of $f\mu$ through $f^{-1}$, we deduce that also the phase measure of $\mu$ is an atom. 
\end{proof}

We remark that refinements of \cite[Theorem 1.4]{HS_equidistribution} have been obtained by Algom, Rodriguez Hertz and Wang \cite{AHW} for self-similar measures with the Rajchman property, and by Dayan, Ganguly and Weiss \cite{DGW} for certain self-similar measures defined by contractions with integer contraction ratios. While these works cover a large class of self-similar measures outside the assumptions of Hochman-Shmerkin, it is nevertheless not difficult to construct self-similar measures which satisfy the assumptions of Corollary \ref{equidistribution} but do not satisfy the assumptions of \cite[Theorem 1.4]{HS_equidistribution}, \cite{AHW} or \cite{DGW}. Consider, for example, the Bernoulli convolution $\nu_\rho$, i.e. the self-similar measure associated to the IFS $\lbrace x \mapsto \rho x -1, x\mapsto \rho x + 1\rbrace$ with the uniform probability vector. It is well-known that when $0 < \rho < 1$ is such that $\rho^{-1}$ is a Pisot number, $\nu_{\rho}$ is not a Rajchman measure \cite{Erdos} but it satisfies the weak separation condition \cite{Garsia, LN}. By Corollary \ref{equidistribution}, $\nu_\rho$ is pointwise $s$-normal for any Pisot $s> 1$ such that $\frac{\log s}{\log \rho} \not\in \Q$.

\subsection{Prospects}

\subsubsection{Weaker separation conditions}
In our proof of Theorem \ref{uniformly_scaling}, the weak separation condition played a crucial role in providing the reference measure $\nu$ in \eqref{nudefinition}. A natural question is whether the statement of the theorem holds in the presence of weaker separation conditions, for example, the \emph{asymptotic weak separation condition} introduced in \cite{Feng_asymptotic}. 
\begin{question}
Let $\Phi = \lbrace \varphi_i \rbrace_{i \in \Gamma}$ be a self-similar IFS satisfying the asymptotic weak separation condition, that is, 
$$
\lim_{n \to \infty} \frac{\log(\max_{\ifi \in \Gamma^n} \# \cN(\ifi))}{n} =0.
$$
Are self-similar measures associated to $\Phi$ uniformly scaling?
\end{question}
This condition lacks the existence of a maximal neighbourhood system, instead allowing the cardinalities of the neighbourhood systems to grow subexponentially in $n$. As a consequence, it is much more difficult to understand the dynamic behaviour of the sequence $(\cN(\io|_k))_{k \in \N}$ for any $\io \in \Gamma^\N$, and it seems that substantial refinements are required in our argument for one to be able to say anything about the scenery flow under this condition.

\subsubsection{A description of the tangent distribution}
In our proof of Theorem \ref{uniformly_scaling}, the tangent distribution of $\mu$ arises from a limiting argument. Therefore, it would be interesting to find a description of the tangent distribution for at least one explicit self-similar measure which satisfies the weak separation condition but not the open set condition. This would require describing the distribution of the sequence $(\zeta(\io|_k))_{k \in \N}$ in \eqref{givesmass} in more detail than what is required for the approximation \eqref{muestimate}.

\bibliographystyle{amsplain}
\bibliography{Scenery_flow_bibliography}

\providecommand{\bysame}{\leavevmode\hbox to3em{\hrulefill}\thinspace}
\providecommand{\MR}{\relax\ifhmode\unskip\space\fi MR }
\providecommand{\MRhref}[2]{%
  \href{http://www.ams.org/mathscinet-getitem?mr=#1}{#2}
}
\providecommand{\href}[2]{#2}
\begin{thebibliography}{10}

\bibitem{AHW}
Amir Algom, Federico Rodriguez~Hertz, and Zhiren Wang, \emph{Pointwise
  normality and fourier decay for self-conformal measures}, Preprint, available
  at https://arxiv.org/abs/2012.06529 (2020).

\bibitem{DGW}
Yiftach Dayan, Arijit Ganguly, and Barak Weiss, \emph{Random walks on tori and
  normal numbers in self-similar sets}, Preprint, available at
  https://arxiv.org/abs/2002.00455 (2020).

\bibitem{Erdos}
Paul Erd\"{o}s, \emph{On a family of symmetric {B}ernoulli convolutions}, Amer.
  J. Math. \textbf{61} (1939), 974--976. \MR{311}

\bibitem{Falconerbook}
K.~Falconer, \emph{Fractal geometry: Mathematical foundations and
  applications}, Wiley, 2013.

\bibitem{Feng_asymptotic}
De-Jun Feng, \emph{Gibbs properties of self-conformal measures and the
  multifractal formalism}, Ergodic Theory Dynam. Systems \textbf{27} (2007),
  no.~3, 787--812. \MR{2322179}

\bibitem{Fengpreprint}
\bysame, \emph{Uniformly scaling property of self-similar measures with the
  finite type condition},  (Unpublished manuscript).

\bibitem{FL}
De-Jun Feng and Ka-Sing Lau, \emph{Multifractal formalism for self-similar
  measures with weak separation condition}, J. Math. Pures Appl. (9)
  \textbf{92} (2009), no.~4, 407--428. \MR{2569186}

\bibitem{FFS}
Andrew Ferguson, Jonathan~M. Fraser, and Tuomas Sahlsten, \emph{Scaling scenery
  of {$(\times m,\times n)$} invariant measures}, Adv. Math. \textbf{268}
  (2015), 564--602. \MR{3276605}

\bibitem{FHOR}
J.~M. Fraser, A.~M. Henderson, E.~J. Olson, and J.~C. Robinson, \emph{On the
  {A}ssouad dimension of self-similar sets with overlaps}, Adv. Math.
  \textbf{273} (2015), 188--214. \MR{3311761}

\bibitem{FP}
Jonathan Fraser and Mark Pollicott, \emph{Uniform scaling limits for ergodic
  measures}, J. Fractal Geom. \textbf{4} (2017), no.~1, 1--19. \MR{3631374}

\bibitem{Furstenberg}
Hillel Furstenberg, \emph{Ergodic fractal measures and dimension conservation},
  Ergodic Theory Dynam. Systems \textbf{28} (2008), no.~2, 405--422.
  \MR{2408385}

\bibitem{Garsia}
Adriano~M. Garsia, \emph{Arithmetic properties of {B}ernoulli convolutions},
  Trans. Amer. Math. Soc. \textbf{102} (1962), 409--432. \MR{137961}

\bibitem{Gavish}
Matan Gavish, \emph{Measures with uniform scaling scenery}, Ergodic Theory
  Dynam. Systems \textbf{31} (2011), no.~1, 33--48. \MR{2755920}

\bibitem{HHR}
Kathryn Hare, Kevin Hare, and Alex Rutar, \emph{When the weak separation
  condition implies the generalized finite type condition}, Proceedings of the
  American Mathematical Society (2020), 1.

\bibitem{Hochman}
Michael Hochman, \emph{Dynamics on fractals and fractal distributions},
  Preprint, available at https://arxiv.org/abs/1008.3731 (2010).

\bibitem{Hochmanoverlaps}
\bysame, \emph{On self-similar sets with overlaps and inverse theorems for
  entropy}, Ann. of Math. (2) \textbf{180} (2014), no.~2, 773--822.
  \MR{3224722}

\bibitem{HochmanICM}
\bysame, \emph{Dimension theory of self-similar sets and measures}, Proceedings
  of the {I}nternational {C}ongress of {M}athematicians---{R}io de {J}aneiro
  2018. {V}ol. {III}. {I}nvited lectures, World Sci. Publ., Hackensack, NJ,
  2018, pp.~1949--1972. \MR{3966837}

\bibitem{HS}
Michael Hochman and Pablo Shmerkin, \emph{Local entropy averages and
  projections of fractal measures}, Ann. of Math. (2) \textbf{175} (2012),
  no.~3, 1001--1059. \MR{2912701}

\bibitem{HS_equidistribution}
\bysame, \emph{Equidistribution from fractal measures}, Invent. Math.
  \textbf{202} (2015), no.~1, 427--479. \MR{3402802}

\bibitem{Kac}
M.~Kac, \emph{On the notion of recurrence in discrete stochastic processes},
  Bull. Amer. Math. Soc. \textbf{53} (1947), 1002--1010. \MR{22323}

\bibitem{KR}
Antti K\"{a}enm\"{a}ki and Eino Rossi, \emph{Weak separation condition,
  {A}ssouad dimension, and {F}urstenberg homogeneity}, Ann. Acad. Sci. Fenn.
  Math. \textbf{41} (2016), no.~1, 465--490. \MR{3467722}

\bibitem{Kakutani}
Shizuo Kakutani, \emph{Random ergodic theorems and {M}arkoff processes with a
  stable distribution}, Proceedings of the {S}econd {B}erkeley {S}ymposium on
  {M}athematical {S}tatistics and {P}robability, 1950, University of California
  Press, Berkeley and Los Angeles, 1951, pp.~247--261. \MR{0044773}

\bibitem{Kim}
Hyun~Jung Kim, \emph{Skew product action}, Int. J. Contemp. Math. Sci.
  \textbf{1} (2006), no.~5-8, 205--211. \MR{2289027}

\bibitem{LN}
Ka-Sing Lau and Sze-Man Ngai, \emph{Multifractal measures and a weak separation
  condition}, Adv. Math. \textbf{141} (1999), no.~1, 45--96. \MR{1667146}

\bibitem{LNfinitetype}
\bysame, \emph{A generalized finite type condition for iterated function
  systems}, Adv. Math. \textbf{208} (2007), no.~2, 647--671. \MR{2304331}

\bibitem{LW}
Ka-Sing Lau and Xiang-Yang Wang, \emph{Iterated function systems with a weak
  separation condition}, Studia Math. \textbf{161} (2004), no.~3, 249--268.
  \MR{2033017}

\bibitem{Oneil}
Toby O'Neil, \emph{A measure with a large set of tangent measures}, Proc. Amer.
  Math. Soc. \textbf{123} (1995), no.~7, 2217--2220. \MR{1264826}

\bibitem{Shmerkinlq}
Pablo Shmerkin, \emph{On {F}urstenberg's intersection conjecture, self-similar
  measures, and the {$L^q$} norms of convolutions}, Ann. of Math. (2)
  \textbf{189} (2019), no.~2, 319--391. \MR{3919361}

\bibitem{Varju}
P\'{e}ter~P. Varj\'{u}, \emph{On the dimension of {B}ernoulli convolutions for
  all transcendental parameters}, Ann. of Math. (2) \textbf{189} (2019), no.~3,
  1001--1011. \MR{3961088}

\bibitem{Zerner}
Martin P.~W. Zerner, \emph{Weak separation properties for self-similar sets},
  Proc. Amer. Math. Soc. \textbf{124} (1996), no.~11, 3529--3539. \MR{1343732}

\end{thebibliography}

\end{document}